\numberwithin{equation}{section}
\newtheorem{theorem}{Theorem}[section]
\newtheorem{lemma}[theorem]{Lemma}
\newtheorem{proposition}[theorem]{Proposition}
\theoremstyle{definition}
\newtheorem{remark}[theorem]{Remark}
\newcommand{\df}[1]{{\it{#1}}{\index{#1}}}
\newcommand\cD{\mathcal D}
\newcommand\cB{\mathcal{B}}
\newcommand\C{\mathbb{C}}
\newcommand{\Langle}{\mathop{<}\!}
\newcommand{\Rangle}{\!\mathop{>}}
\newcommand{\xx}{\!\Langle x\Rangle}
\newcommand{\fP}{\mathfrak{P}}
\newcommand{\CC}{\mathbb{C}}
\newcommand{\cL}{\mathscr{L}}
\newcommand\cdotb{\boldsymbol{\cdot}}
\newcommand{\hfP}{\cB_R}
\newcommand{\fL}{\mathfrak{L}}
\newcommand{\vg}{{\tt{g}}}
\newcommand{\gG}{G}
\newcommand{\cF}{\mathcal{F}}
\newcommand{\tb}{\mathfrak{b}}
\newcommand{\ty}{\mathfrak{y}}
\newcommand{\EE}{\mathbb{E}}
\newcommand{\sS}{\mathscr{S}}
\newcommand{\sT}{\mathscr{T}}
\title[Reinhardt Spectrahedra]{Reinhardt Free Spectrahedra}
\author[Ben Jemaa]{Munir Ben Jemaa${}^*$} 
\address{
  University of Flordia \\ Gainesville, FL}
\email{munirbenjemaa@ufl.edu} 
\thanks{${}*$ Research supported by the University of Florida 
University Scholars undergraduate research program}
\author[McCullough]{Scott McCullough}
\address{Department of Mathematics\\
  University of Flordia \\ Gainesville, FL}
\email{sam@ufl.edu}
\subjclass[2010]{47L25, 32H02 (Primary); 52A05, 46L07 (Secondary)}
\keywords{bianalytic map, birational map, free spectrahedron, free analysis}
 \numberwithin{equation}{section}
\begin{document}

\maketitle


\begin{abstract}
 The automorphism group of a particular free spectrahedron 
 is determined via
 a novel argument involving  algebraic methods.
\end{abstract}

\section{Introduction}
\thispagestyle{empty}
 Fix norm $1$ matrices $C_1,C_2$ of size $s\times s.$ 
 For positive integers $n,$ let $M_n(\C)^2$ denote
 the set of pairs $X=(X_1,X_2)$ of $n\times n$ matrices
and let \df{$\fP[n]$} denote those $X\in M_n(\C)^2$ 
 for which the hermitian block $4\times 4$ matrix  \index{$\cL$}
\[
 \cL(X)= \begin{pmatrix} I_s\otimes I_n & C_1\otimes X_1 & C_2\otimes X_2 & 0 \\
   (C_1\otimes X_1)^* & I_s\otimes I_n & 0 & C_2\otimes X_2 \\
  (C_2\otimes X_2)^* & 0 & I_s\otimes I_n & C_1\otimes X_1\\0& (C_2\otimes X_2)^* &
  (C_1\otimes X_1)^* & I_s\otimes I_n \end{pmatrix}
\]
 is positive definite.
 Here  $X_j^*$ is the adjoint (complex transpose)
 of $X_j$ and $I_n$ is the $n\times n$ identity matrix.
 The sequence of sets $\fP= (\fP[n])_n$ is an example
 of a \df{free spectrahedron}.

 Given  $X\in M_n(\CC)^2$ and $Y\in M_m(\CC)^2,$ and 
 a unitary matrix $U\in M_n(\C),$  let \index{$X\oplus Y$}
\[
 X\oplus Y = \left ( \begin{pmatrix} X_1 &0\\0&Y_1\end{pmatrix},
  \begin{pmatrix} X_2 & 0\\ 0& Y_2\end{pmatrix} \right )
\]
 and \index{$U^*XU$}
\[
 U^*XU  =(U^*X_1U, \, U^*X_2U).
\]
 Observe, if $X\in \fP[n]$ and $Y\in \fP[m],$ then 
  $X\oplus Y\in \fP[n+m]$ and $U^*XU \in \fP[n];$ that is
 $\fP$ is {\it closed with respect to direct sums
 and unitary similarity.} \index{closed with respect to direct sums} 
 \index{closed with respect to unitary similarity} 

 Let $M(\C)$ denote the sequence $(M_n(\C))$
 and let $M(\C)^2$ denote the sequence $(M_n(\C)^2).$
 A \df{free analytic function} $f:\fP\to M(\C)$ is a sequence $(f[n])$
 of analytic functions $f[n]:\fP[n]\to M_n(\C)$ that
 respects direct sums and unitary similarities. That is,
 given $X\in \fP[n]$ and $Y\in \fP[m]$ and a unitary matrix $U\in M_n(\C),$
\[
 f[n+m](X\oplus Y)=f[n](X)\oplus f[m](Y)
\]
and
\[
f[n](U^*XU)= U^*f[n](X)U.
\]
 Typically we write $f$ in place of $f[n].$  The general
 definition of a free analytic function appears in 
 Subsection~\ref{s:freefun} below. 
 While it may not immediately appear so, free analytic
 functions are  the natural (freely) non-commutative analogs of
 analytic functions in several complex variables.

 A \df{free analytic mapping} $\varphi=(\varphi_1,\varphi_2):\fP\to\fP$ 
  is a pair of free analytic functions $\varphi_j:\fP\to M(\C)^2$
 such that 
\[
 \varphi(X)=(\varphi_1(X),\varphi_2(X)) \in \fP
\]
 for all $X\in \fP.$ An \df{automorphism} $\varphi$ of $\fP$
 is a free analytic mapping $\varphi:\fP\to\fP$ 
 for which there exists a free analytic mapping 
 $\psi:\fP\to\fP$ such that $\psi(\varphi(X))=X=\varphi(\psi(X))$
 for $X\in \fP.$

 Given $\gamma=(\gamma_1,\gamma_2)\in \CC^2$ with 
 $|\gamma_j|=1,$ the function $f(x) = (\gamma_1 x_1,\gamma_2 x_2)$
 is  automorphism of $\fP.$ Likewise, 
 $f(x)=(\gamma_2x_2,\gamma_1x_1)$ is an automorphism. 
 We call these automorphism \df{trivial automorphisms}. 
Theorem~\ref{t:main} below is the main result of this paper.

\begin{theorem}
 \label{t:main}
  If  
 \begin{enumerate}[(i)]
  \item $C_1$ and $C_2$ are invertible; and
  \item  the C-star algebras generated by
  $\{C_1^*C_1,C_2^*C_2\}$ and $\{C_1C_1^*, C_2C_2^*\}$
  are all of $M_s(\CC),$
 \end{enumerate}
  then the automorphisms of $\fP$ are trivial.
\end{theorem}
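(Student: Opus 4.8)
The plan is to reduce, via soft analytic arguments, to the case of \emph{linear} automorphisms, and then to solve a purely algebraic problem about unitary conjugation of the coefficient matrices of $\cL$; the latter is where hypotheses (i)--(ii) are used, and it is the main obstacle. For Step~1, observe that since $\|C_1\|=\|C_2\|=1$, positivity of the principal $2\times 2$ subpencils of $\cL$ on the block index sets $\{1,2\}$ and $\{1,3\}$ forces $\|X_1\|<1$ and $\|X_2\|<1$ on $\fP$, so $\fP$ is bounded; it is convex (each $\fP[n]$ is an LMI set), contains $0$ in its interior, and is circular (the rotations are automorphisms). I would then invoke the known rigidity for bianalytic self-maps of bounded free spectrahedra (a free Cartan-type theorem) to conclude that an automorphism $\varphi$ is affine, $\varphi(X)=\Lambda X+b$ with $\Lambda\in GL_2(\C)$ and $b=(b_1I,b_2I)$. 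Conjugating $\varphi$ by the automorphism $X\mapsto -X$ yields the automorphism $X\mapsto\Lambda X-b$; composing it with $\varphi^{-1}$ yields the translation $X\mapsto X-2b$, which preserves the bounded set $\fP$ only if $b=0$. Thus $\varphi(X)=\Lambda X$, and the trivial automorphisms are exactly those for which $\Lambda$ is a unimodular diagonal or anti-diagonal matrix; it remains to show every such $\Lambda$ has this form.

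For Step~2 I would make the problem algebraic. Put $G_1=E_{12}+E_{34}$, $G_2=E_{13}+E_{24}\in M_4(\C)$ and $P=G_1\otimes C_1$, $Q=G_2\otimes C_2\in M_{4s}(\C)$, so that $\cL$ is the monic free linear pencil $\cL(X)=I_{4s}\otimes I_n+P\otimes X_1+P^*\otimes X_1^*+Q\otimes X_2+Q^*\otimes X_2^*$. First one checks that (i)--(ii) force $\cL$ to be a \emph{minimal} pencil, i.e. the $*$-algebra $\mathcal A$ generated by $P,Q$ is all of $M_{4s}(\C)$: from $P^*P=(E_{22}+E_{44})\otimes C_1^*C_1$, $PP^*=(E_{11}+E_{33})\otimes C_1C_1^*$ and the two $Q$-analogues, and the invertibility of $C_1,C_2$, one gets $(E_{ii}+E_{jj})\otimes I_s\in\mathcal A$ for the relevant pairs, hence every $E_{ii}\otimes I_s$, hence $E_{11}\otimes C_1C_1^*$ and $E_{11}\otimes C_2C_2^*$; by (ii) these generate $E_{11}\otimes M_s(\C)$, and multiplying by $P,Q$ and using invertibility of $C_1,C_2$ again propagates this to $E_{ij}\otimes M_s(\C)\subseteq\mathcal A$ for all $i,j$. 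Since $\Lambda$ is invertible, $\cL\circ\Lambda$ is again a minimal monic pencil defining the same free spectrahedron $\fP$ as $\cL$, so by uniqueness of the minimal pencil there is a unitary $U\in M_{4s}(\C)$ with $\cL(\Lambda x)=U^*\cL(x)U$; equating the coefficients of $x_1$ and $x_2$ yields $U^*PU=\lambda_{11}P+\lambda_{21}Q$ and $U^*QU=\lambda_{12}P+\lambda_{22}Q$.

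Step~3 is the algebraic classification of the pairs $(\Lambda,U)$ solving these two identities; the relevant structural data are $\|P\|=\|Q\|=1$, $P^2=Q^2=0$, $PQ+QP=E_{14}\otimes(C_1C_2+C_2C_1)$, and $P^*P=(E_{22}+E_{44})\otimes C_1^*C_1$, $Q^*Q=(E_{33}+E_{44})\otimes C_2^*C_2$, $P^*Q=E_{23}\otimes C_1^*C_2$, and their adjoints. Applying Step~2 to the iterates $\varphi^k$ (linear parts $\Lambda^k$) and using $\|U^{*k}PU^k\|=\|P\|=1$ together with the linear independence of $P,Q$ shows $\{\Lambda^k:k\in\mathbb Z\}$ is bounded in $GL_2(\C)$, so $\Lambda$ is diagonalizable with all eigenvalues of modulus $1$. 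To upgrade this to ``(anti-)diagonal'', square $U^*PU=\lambda_{11}P+\lambda_{21}Q$: since $P^2=Q^2=0$ this gives $\lambda_{11}\lambda_{21}\,E_{14}\otimes(C_1C_2+C_2C_1)=0$, and likewise $\lambda_{12}\lambda_{22}\,E_{14}\otimes(C_1C_2+C_2C_1)=0$. If $C_1C_2+C_2C_1\ne 0$ this forces $\lambda_{11}\lambda_{21}=\lambda_{12}\lambda_{22}=0$, so (by invertibility of $\Lambda$) $\Lambda$ is diagonal or anti-diagonal, and then $\|U^*PU\|=\|P\|=1$ and $\|U^*QU\|=\|Q\|=1$ force the surviving entries to be unimodular, so $\varphi$ is trivial. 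The remaining case $C_1C_2+C_2C_1=0$ is where the coarse invariants (nilpotency order, rank) no longer separate $P$ from $Q$: here $P,Q$ anticommute, and I would argue with the $*$-structure, comparing the spectrum of $U^*(P^*P+Q^*Q)U$ with that of $P^*P+Q^*Q$ — the former splits as a zero block, a block equal to $\mathrm{diag}(C_1,C_2)^*\big((\Lambda\Lambda^*)\otimes I_s\big)\mathrm{diag}(C_1,C_2)$, and an $E_{44}$-block built from $\Lambda\Lambda^*$ and the $C_j^*C_j$ — and invoke (ii) to conclude $\Lambda\Lambda^*=I_2$ and, as above, that $\Lambda$ is a unimodular (anti-)diagonal matrix.

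The main obstacle is Step~3, and within it the degenerate case $C_1C_2+C_2C_1=0$: in that case one cannot read off ``(anti-)diagonality'' from the nilpotency relations alone, and hypothesis (ii) must enter a second time (beyond its role in establishing minimality in Step~2) in an essential way. I expect this finite, representation-theoretic analysis of the identity $\cL(\Lambda x)=U^*\cL(x)U$ — rather than any analytic input — to be the novel algebraic argument the abstract refers to.
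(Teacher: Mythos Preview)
Your Step~1 contains a genuine gap that undermines the whole strategy. The free Cartan--Kaup--Wu theorem you invoke (this is \cite[Theorem~4.4]{proper} in the paper's references) does \emph{not} say that an automorphism of a bounded circular free spectrahedron is affine; it says that an automorphism $\varphi$ with $\varphi(0)=0$ of a circular domain is \emph{linear}. There is no blanket affineness result: already the one-variable free ball $\{X:\|X\|<1\}$---a bounded, convex, circular free spectrahedron---has the non-affine M\"obius automorphisms $X\mapsto (aI-X)(I-a^*X)^{-1}$. Your translation argument in Step~1 is correct \emph{given} affineness, but you have not established affineness, and in fact the paper's entire novel contribution is precisely the proof that $\varphi(0)=0$. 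The paper accomplishes this (Sections~3.1--3.3) by evaluating $\varphi$ on nilpotent tuples $S\otimes X$, producing a second spectraball description $\cB_B$ of $\cB_R$ that encodes both $b=\varphi(0)$ and $\fL=\varphi'(0)$, and then exploiting ball-minimality of the tuple $E$ to extract the identity \eqref{e:pres=1}, from which $b=0$ is deduced via a dichotomy (Lemma~\ref{l:s=1}) and composition arguments (Lemmas~\ref{l:s>1:b=0} and~\ref{l:s=1:b=0}). None of this is bypassable by a soft analytic argument.

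Your Steps~2--3, by contrast, are a reasonable outline for the classification of \emph{linear} automorphisms, and are close in spirit to (though organized differently from) the paper's concluding Subsection~\ref{s:concludes}. Your minimality argument for the pencil is essentially the paper's Proposition~\ref{p:E-is-ball-min}. But note that your Step~3 handling of the degenerate case $C_1C_2+C_2C_1=0$ is, as you acknowledge, only a sketch; the paper avoids this case split entirely because its analysis in Section~3.2 already forces (for $s>1$) the off-diagonal or diagonal vanishing of $\fL$ directly from equation~\eqref{e:pres=1}, without ever needing $PQ+QP$.
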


We wish to highlight two other contributions of this article. In Proposition~\ref{p:free-fun-alt} we show that the definition of free analytic function given here, which is tailored to the  study of  maps on free domains, coincides with other formulations in the literature and in particular such functions respect intertwinings. It is shown in \cite[Proposition~2.5]{proper}, assuming only continuity (and not analyticity) that a free function (as otherwise defined here) is in fact analytic. On the other hand, the analytic assumption is natural and the proof of Proposition~\ref{p:free-fun-alt} is rather simpler than that given  in \cite{proper}. The proof of Proposition~\ref{p:free-fun-alt} is modeled after arguments found in \cite{meric}. Proposition~\ref{p:freecirc} characterizing spectraballs is from \cite{circular}. Here we provide  an alternate proof.

 The remainder of this introduction contains more complete definitions of free spectrahedra and spectraballs, free analytic functions and maps, as well as background and motivation for studying the automorphism group of $\fP$. Preliminary results are contained in Section~\ref{s:prelims} and the proof of Theorem~\ref{t:main} appears in Section~\ref{s:proof}.

\subsection{Free polynomials and their evaluations}
 Fix a positive integer $\vg.$ Let $x=(x_1,\dots,x_\vg)$ denote $\vg$ freely non-commuting variables and let $\xx$ denote the semigroup of words in $x$ with $\varnothing,$ the enpty word, playing the role of the identity.   The length of the empty word is
 $0$ and otherwise the \df{length of a word}
\begin{equation}
 \label{d:word}
 w= x_{j_1} x_{j_2} \cdots x_{j_m}
\end{equation}
 is $m$ denoted $|w|=m.$

  For positive integers $n$, let $M_n(\C)^\vg$  denote the set 
of $\vg$-tuples $X=(X_1,\dots,X_\vg)$ of $n\times n$ matrices
 with entries from $\C$.  Let $M(\C)^{\vg}$ denote the sequence $(M_n(\C)^\vg).$  Given 
  a tuple $X=(X_1,\dots,X_\vg)\in M(\C)^\vg,$ let
\[
 X^w = X_{j_1} X_{j_2} \cdots X_{j_m},
\]
 with $w\in \xx$ as in equation~\eqref{d:word}.
 Thus $X^w$ is the \df{evaluation} of the word $w$ at the
 tuple $X.$ This evaluation extends to the \df{free algebra} of \df{free polynomials}  $\C\xx$  \index{$\C\xx$}
 equal the   $\C$ linear combinations of elements of $\xx.$ Elements  $p\in \C\xx$ have the form
\begin{equation}
 \label{e:polyp}
 p=\sum_{w\in\xx} p_w w,
\end{equation}
 where the sum is finite.  The polynomial $p$ evaluates at $X\in M(\CC)^\vg$ as
\[
 p(X) =\sum_{w\in \xx} p_w X^w.
\]
 A matrix-valued free polynomial can be viewed either as a matrix with polynomial entries or a polynomial with matrix coefficients. In the latter case, given positive integers $d,e$ and $p_w\in M_{d,e}(\CC),$ the finite sum in equation~\eqref{e:polyp} 
 is a matrix valued polynomial. To evaluate this $p$ at a tuple $X\in M(\CC)^{\vg}$  we will make use of the (Kronecker) tensor product $S\otimes T$ of matrices $S$ and $T,$ setting 
\[
 p(X)=\sum p_w \otimes X^w.
\]

\subsection{Free spectrahedra}
Given  $A\in M_{d\times e}(\C)^\vg$,  let $\Lambda_A$ denote 
the \df{homogeneous linear pencil}  \index{$\Lambda_A(x)$}
$\Lambda_A(x)=\sum_j A_jx_j.$ It  
evaluates at 
$X\in M_n(\C)^\vg$
as
\[
\Lambda_A(X) = \sum_{j=1}^\vg
   A_j \otimes X_j \in M_{d\times e}(\C)\otimes M_n(\C). 
\]
In the case  $A$ is square  ($d=e$), we let \index{$L_A$}
\[
\begin{split}
 L_A(X) & = I_d\otimes I_n 
 +\Lambda_A(X) +\Lambda_A(X)^*\\
 & = I+\sum A_j \otimes X_j
   + \sum A_j^* \otimes X_j^* \in M_d(\C)\otimes M_n(\C).
\end{split}
\]

 The  set $\cD_A[1]\subseteq \C^\vg$ consisting  of  $x\in\C^\vg$
  such that $L_A(x)\succ 0$ is a \df{spectrahedron}. 
 Spectrahedra are basic objects in a number of areas of mathematics;
 e.g.~semidefinite programming, convex optimization \cite{WSV}
  and real algebraic geometry \cite{BPT}.

The  \df{free spectrahedron} determined by
$A\in M_d(\C)^\vg$ is the sequence of sets $\cD_A= (\cD_A[n])$, where\index{$\cD_A$}
\[
\cD_A[n] =\{X\in M_n(\C)^\vg: L_A(X)\succ 0\},
\]
 and $T\succ 0$ indicates that the square matrix $T$
 is positive definite (hermitian with positive eigenvalues). 
 Observe  that $\fP$ is the free spectrahedron {$\cD_R,$} where \index{$\cD_R$}
\begin{equation}
\label{e:R}
 R_1 =\begin{pmatrix} 0&C_1&0&0\\0&0&0&0\\0&0&0&C_1\\0&0&0&0\end{pmatrix},
  \ \ \
R_2 = \begin{pmatrix} 0&0&C_2&0\\0&0&0&C_2\\0&0&0&0\\0&0&0&0\end{pmatrix}.
\end{equation}

 A free spectrahedron $\cD_A$ 
 is not determined by the spectrahedron $\cD_A[1].$ See Proposition~\ref{p:fPp-is-a-ball}.
Free spectrahedra are canonical objects in the theories
 of operator systems and spaces and completely
 positive maps. They are related to quantum channels from
 quantum information theory. That they  arise naturally 
 in certain systems engineering problems governed by a signal
 flow diagram \cite{convert-to-matin, emerge, SIG}
 also provides motivation for  studying free spectrahedra.

\subsubsection{Spectraballs}
  Given a tuple $G=(G_1,\dots,G_\vg)$ of $d\times e$ matrices, the sequence
 $\cB_G= (\cB_G[n])_n$  defined by \index{$\cB_G$}
\[
 \cB_G[n] =\{X\in M_n(\C)^\vg: \| \sum_{j=1}^\vg G_j\otimes X_j\|< 1\}
\]
is a \df{spectraball}. The spectraball at \df{level} one,   $\cB_G[1],$
 is a rotationally invariant convex subset of $\C^\vg.$ 
 The spectraball $\cB_G$ is a spectrahedron since 
 $\cB_G=\cD_B$ for $B=(\begin{smallmatrix}0&G\\0&0\end{smallmatrix})$.
 Under the hypotheses of Theorem~\ref{t:main},  the  spectrahedron $\fP$ is not a spectraball. See 
 Proposition~\ref{p:fPp-is-not-a-ball}.

 A spectrahedron $\cD_A$ has  its naturally \df{associated spectraball},
\begin{equation}
\label{e:specballA}
 \cB_A =\{X: \|\Lambda_B(X)\|<1\}=\{X: \begin{pmatrix} 0 & X\\0&0\end{pmatrix} \in \cD_A\}.
\end{equation}
 The spectraball $\cB_R$ associated to $\fP=\cD_R$ plays an important role in this article.

\subsubsection{Free sets}
 A \df{free set} $\sS\subseteq M(\C)^\vg$ is a sequence
 $\sS=(\sS[n])_n$  such that $\sS[n]\subseteq M_n(\C)^\vg$
 for each positive integer $n$ and such that $\sS$ is 
 \df{closed with respect to direct and unitary similarity}:
\begin{enumerate}[(i)]
\item   if
 $X\in \sS[n]$ and $Y\in \sS[m],$   then 
\[
X\oplus Y = \left ( X_1\oplus Y_1,\dots, X_\vg \oplus Y_{\vg}\right) \in \sS[n+m],
\]
where
\[
 X_j\oplus Y_j = \begin{pmatrix} X_j & 0 \\ 0 & Y_j \end{pmatrix};
\]
\item if $X\in \sS[n]$  and $U\in M_n(\C)$ is unitary, then
\[
U^* X U = \left ( U^* X_1U, \dots, U^* X_\vg U \right)\in \sS[n].
\]
\end{enumerate}
 We say $\sS$ is \df{open} if each $\sS[n]$ is open and $\sS$ is \df{bounded}
 if there exists a $\kappa$ such that for all $n$ and $X\in \sS[n],$
 the $n\times ng$ matrix
\[
 \begin{pmatrix} X_1 & \dots & X_\vg \end{pmatrix}
\]
 has norm at most $\kappa.$

 A free spectrahedron is an open  free set.

\subsection{Free analytic functions}
 \label{s:freefun}
 Given an open free set $\sS,$  a  \df{free function} 
 $f:\sS\to M(\C)$  is a sequence $f=(f[n]),$ where \index{$f[n]$}
 $f[n]:\sS\mapsto M_n(\C),$ that satisfies the axioms
\begin{enumerate}[(i)]
 \item  \label{i:sums}
 if $X\in \sS[n]$ and $Y\in \sS[m],$ then
\[
 f[n+m](X\oplus Y) = f[n](X)\oplus f[m](Y); 
\]
\item \label{i:usim}
  if $X\in \sS[n]$ and $U$ is an $n\times n$ unitary matrix, then
\[
 f[n](U^* XU)=  U^*f[n](X) U. 
\]
\end{enumerate}
 Thus free functions \df{respect direct sums and unitary similarity}.
   The free function $f$ is \df{analytic} if each $f[n]$ is analytic. 
  We typically write $f$ in place of $f[n].$

Turning to examples, free polynomials are evidently free analytic functions.  A free rational function $r$  (regular at $0$) is a free analytic function that has a realization formula; that is, there exists a positive integer $e,$  a tuple  $A\in M_{e}(\C)^\vg$ and vectors $c,b\in \C^e,$ such that 
\[
 r(x) = c^* (I-\Lambda_A(x))^{-1} b.
\]
 The natural domain of  $r$ consists of those tuples $X\in M_n(\C)^\vg$ for which
 $I-\Lambda_A(X)$ is invertible and for such an $X,$ 
\[
 r(X) = (I_d\otimes c)^* 
 \left (I_d\otimes I_n -\Lambda_A(X)\right )^{-1} \, (I_d\otimes b)
  \in M_n(\C).
\]
 See \cite{KVV} for further information about free functions.

There are two other formulations of free functions that are equivalent and more common in the literature.  They do not not assume analyticity, but rather have it as a consequence of mild additional assumptions such as continuity or boundedness, in which case they are equivalent to the formulation adopted here.  In one formulation, item~\ref{i:usim} is replaced by the hypothesis that $f$ \df{respects similarities}: if $X\in \sS[n]$ and $T$ is an invertible matrix such that $T^{-1}XT\in \sS[n]$, then $f(T^{-1}XT) = T^{-1} f(X) T$.  The other formulation replaces items~\ref{i:sums} and \ref{i:usim} with the single axiom that $f$ \df{respects intertwinings}: if $X\in \sS[n]$ and $Y\in \sS[m]$ and $\Gamma$ is an $m\times n$ matrix such that $\Gamma X=Y\Gamma$, then $\Gamma f(X)=f(Y)\Gamma$. 
 Proposition~\ref{p:free-fun-alt} is a variation on \cite[Lemma~3.5]{meric}.
 A proof appears in Subsection~\ref{s:nilp}. See also \cite[Proposition~2.5]{proper}.

\begin{proposition}
 \label{p:free-fun-alt}
 If $\sS$ is a free open set and $f:\sS\to M(\C)$  is a free analytic function, then $f$ respects intertwinings.
\end{proposition}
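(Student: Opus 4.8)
The plan is to deduce the intertwining property from the direct-sum and unitary-similarity axioms together with analyticity, via a standard argument using nilpotent similarities. Fix $X\in\sS[n]$, $Y\in\sS[m]$ and an $m\times n$ matrix $\Gamma$ with $\Gamma X_j=Y_j\Gamma$ for all $j$, and put $N=n+m$. Using that $\sS$ is closed under direct sums, form $W=X\oplus Y\in\sS[N]$, written with respect to the decomposition $\C^N=\C^n\oplus\C^m$, so that $W_j=\bigl(\begin{smallmatrix}X_j&0\\0&Y_j\end{smallmatrix}\bigr)$; and form the matrix $E=\bigl(\begin{smallmatrix}0&0\\\Gamma&0\end{smallmatrix}\bigr)\in M_N(\C)$, so $E^2=0$. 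Two routine block computations frame the argument: first, the hypothesis $\Gamma X_j=Y_j\Gamma$ holds for all $j$ if and only if $EW_j=W_jE$ for all $j$, i.e.\ $E$ commutes with $W$; and second, by the direct-sum axiom $f(W)=f(X)\oplus f(Y)$, the desired conclusion $\Gamma f(X)=f(Y)\Gamma$ holds if and only if $E$ commutes with $f(W)$. Thus it suffices to prove: whenever $E\in M_N(\C)$ satisfies $E^2=0$ and commutes with $W\in\sS[N]$, then $E$ commutes with $f(W)$.

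To this end, set $T_s=I+sE$ for $s\in\C$; then $T_s$ is invertible with $T_s^{-1}=I-sE$, and since $E$ commutes with $W$ one has $T_s^{-1}WT_s=W\in\sS[N]$ for every $s$. The crux is to show that $f$ respects these similarities for $s$ near $0$, i.e.\ $f(T_s^{-1}WT_s)=T_s^{-1}f(W)T_s$. Granting this, the left side equals $f(W)$, so $T_s^{-1}f(W)T_s-f(W)$ is a polynomial in $s$ of degree at most two that vanishes for all small $s$, hence identically; its coefficient of $s$ is $f(W)E-Ef(W)$ (using $T_s^{-1}=I-sE$), so $f(W)E=Ef(W)$, as required.

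For the local similarity claim, introduce the open set $\Omega=\{T\in GL_N(\C):T^{-1}WT\in\sS[N]\}$ and the maps $G(T)=T^{-1}f(W)T$ and $H(T)=f(T^{-1}WT)$ on $\Omega$. Both are holomorphic: $G$ is rational in the entries of $T$, while $H$ is the composition of the holomorphic map $T\mapsto T^{-1}WT$ with the analytic function $f[N]$ on $\sS[N]$; this is the one place analyticity of $f$ enters. The unitary-similarity axiom, together with closure of $\sS$ under unitary conjugation, gives $G(U)=H(U)$ for all $U$ in the unitary group $U(N)\subseteq\Omega$, in particular near $I$.

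The main obstacle is upgrading agreement of $G$ and $H$ on $U(N)$ to agreement on a neighborhood of $I$, for which one uses that $U(N)$ is a maximal totally real submanifold of $GL_N(\C)$. Concretely, pull $G-H$ back through the exponential chart $B\mapsto e^{B}$ at $I$; one gets a holomorphic function on a small ball about $0$ in $M_N(\C)$ that vanishes on the real subspace of skew-Hermitian matrices, since for $B$ near $0$ the matrix $e^{B}$ is unitary exactly when $B^{*}=-B$. As $M_N(\C)$ is the complexification of the space of skew-Hermitian matrices, choosing a real basis identifies this with a holomorphic function near $0$ in $\C^{N^2}$ vanishing on $\RR^{N^2}$, whence all its power-series coefficients vanish and it is identically zero near $0$. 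Therefore $G=H$ near $I$, so $T_s^{-1}f(W)T_s=f(W)$ for all small $s$, and the coefficient comparison from the previous step completes the proof. The rest is bookkeeping: the two block identities, the formula $T_s^{-1}=I-sE$, and the elementary fact that a convergent power series vanishing on a real neighborhood of the origin has zero coefficients.
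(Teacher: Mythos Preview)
Your proof is correct and complete, but it follows a genuinely different route from the paper's. The paper constructs an explicit one-complex-parameter family of Julia-type matrices $T_\rho(\Gamma)[z]$ that are unitary precisely when $|z|=1$, observes that the difference $F(z)=f(T_\rho[z]^{-1}(X\oplus Y)T_\rho[z])-T_\rho[z]^{-1}f(X\oplus Y)T_\rho[z]$ is analytic on an annulus and vanishes on the unit circle, invokes the one-variable identity theorem, and then sends $z=\rho\to 0$ to reach the upper-triangular similarity $\bigl(\begin{smallmatrix}I&\Gamma\\0&I\end{smallmatrix}\bigr)$. You instead work with the full unitary group at once: you note that $G(T)=T^{-1}f(W)T$ and $H(T)=f(T^{-1}WT)$ agree on $U(N)$ by the unitary-similarity axiom, and then use that $U(N)$ is a maximal totally real submanifold of $GL_N(\C)$ (via the exponential chart and the decomposition $M_N(\C)=\mathfrak{u}(N)\oplus i\,\mathfrak{u}(N)$) to conclude $G=H$ on a neighborhood of $I$.

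What each buys: the paper's argument is more elementary in that it only needs the one-variable identity theorem and an explicit limit, avoiding any appeal to totally real geometry; on the other hand it requires the Julia-matrix construction and a somewhat delicate choice of $\|\Gamma\|$, $\rho$, and $z$ to keep everything inside $\sS$. Your argument is more conceptual and in fact proves the stronger intermediate statement that $f$ respects \emph{all} similarities sufficiently close to the identity, from which the nilpotent case $T_s=I+sE$ drops out immediately; the cost is invoking the standard (but several-variable) fact that a holomorphic function vanishing on a maximal totally real submanifold vanishes identically. Both approaches share the same skeleton---agreement on unitaries plus analyticity forces agreement on nearby invertibles---but implement the analytic continuation step quite differently.
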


\subsection{Automorphisms of free spectrahedra}
 Given free sets $\sS\subseteq M(\C)^\vg$ and $\sT\subseteq M(\C)^\vg$
 a \df{free map}  $f:\sS\to \sT$ is an $\vg$-tuple
 $f=(f^1,\dots,f^\vg)$ of free functions $f:\sS\to M(\C)$ 
 such that $f(X)\in \sT$ for all $X\in \sS.$  A \df{bianalytic}
 map $f$ between free spectrahedra $\cD_A$ and $\cD_B$
 is a free map $f:\cD_A\to\cD_B$ for which
 there exists a free analytic mapping $g:\cD_B\to\cD_A$
 such that $g(f(X))=X$ and $f(g(Y))=Y$ for all $X \in \cD_A$
 and $Y\in\cD_B.$  We refer to  $g$ as  the \df{inverse}
 of $f$ and write $g=f^{-1}.$  A natural problem, from several
  different  perspectives, is to determine the bianalytic
 maps between two free spectrahedra. This 
 problem is the free analysis analog of rigidity phenomena
 in several complex variables and from this perspective
 it is expected  that two free spectrahedra are rarely
 bianalytic.

 The paper\cite{bestmaps} determines, under certain
 generic   irreducibility inspried  hypotheses on $A$ and $B,$
 the tuples $(A,B,f)$ of bianalytic maps $f:\cD_A\to \cD_B.$
 It turns out that $A$ and $B$ are closely linked and
 $f$ has a highly algebraic description. In that same
 paper, bianalytic maps between spectraballs are determined
 without any additional hypotheses. Again, these maps
 have a highly algebraic description. 

 An \df{automorphism} $f$ of a free spectrahedron $\cD_A$ is a bianalytic
 map $f:\cD_A\to\cD_A.$  
 If $f,g:\cD_A\to \cD_B$ are bianalytic, 
 then  the map $g^{-1}\circ f:\cD_A\to\cD_A$ is an automorphism. 
 Thus, the automorphism group of $\cD_A$ places
 constraints on the bianalytic maps $f:\cD_A\to\cD_B.$
 
 A free set $\sS$  is \df{circularly symmetric} if 
\[
 \gamma X=(\gamma X_1,\dots, \gamma X_\vg)\in \sS
\]
whenever $X\in \sS$ and $\gamma \in \CC$ is unimodular.
 A natural class of free spectrahedra not covered by
 the results  in \cite{bestmaps} 
 are those with circular
 symmetry that are not spectraballs. In particular,
 those $\fP$ satisfying the hypotheses of Theorem~\ref{t:main} provide
 examples of a spectrahedron whose automorphism
 group is not yet classified.  

 We are now in a position to provide an overview of the proof of Theorem~\ref{t:main}.
 A somewhat routine argument shows if $\varphi$ is a linear automorphism of $\fP,$ then $\varphi$ is trivial.
 A consequence of the free analog of the  Caratheodory-Cartan-Kaup-Wu
 from \cite{proper} is the following. If $\varphi:\fP\to\fP$ is an automorphism and $\varphi(0)=0,$ then $\varphi$ is linear.  The strategy employed here to show  if $\varphi$ is an automorphism of $\fP,$ then $\varphi(0)=0$ is novel, using algebraic aspects of the theory of spectraballs.
 Given an  automorphism $\varphi:\fP\to\fP$
 with  $\varphi(0)\in \C^2$  not necessarily $0,$
 we construct a  tuple $B$ described solely in terms of 
 $\varphi(0) \in \C^2$ and $\varphi^\prime(0)\in M_2(\C)$
 such that   $\cB_R=\cB_B.$  This equality is analyzed
 using algebraic results from \cite{bestmaps}, ultimately concluding
 $s=1.$ When  $s=1,$ the spectrahedron $\fP[1]$ is the generalized complex ellipsiod
 $\{z=(z_1,z_2)\in \C^2: |z_1|+|z_2|<1\}.$
 It is known \cite{Kodama,first-steps}, using very different 
 techniques than those here,  that the 
 analytic automorphisms of this complex ellipsiod are trivial. 
  In particular,  $\varphi(0)=0.$ 
 We give a self contained proof of this fact using results developed in this paper.

 By contrast, the proof strategy
 for classifying the automorphism group of
 hyper-Reinhardt domains, defined below and in \cite{MT,hyper}, 
 proceeds via the dynamics of composing automorphisms
 and using results related to the classical Caratheodory
 interpolation theorem.

\section{Preliminary results}
\label{s:prelims}
This section collects preliminary and ancillary results to Theorem~\ref{t:main}. Subsection~\ref{s:infP} provides alternate characterizations for membership in $\fP$ and establishes that, under the hypotheses of Theorem~\ref{t:main},  $\fP$ is neither a spectraball  nor a  hyper-Reinhardt domain. Additionally, an alternate proof of a results from \cite{circular} is given. Subsection~\ref{s:BR} gathers facts about the spectraball $\cB_R$ associated to $\fP=\cD_R.$ Ball minimality from \cite{bestmaps} is reviewed in Subsection~\ref{s:bmin}, where a ball minimal tuple $E$ such that $\cB_E=\cB_R$ is identified. Subsection~\ref{s:nilp} discusses the evaluation of a free function defined near $0$ on nilpotent tuples. The proof of Proposition~\ref{p:free-fun-alt} appears in Subsection~\ref{s:free-fun-alt}.

\subsection{Membership in $\fP$}
\label{s:infP}
 A matrix $T$ is a \df{strict contraction} if $\|T\|<1,$
 where $\|T\|$ is the operator norm of $T.$

\begin{proposition}
\label{p:fP-alt} For $X=(X_1,X_2)\in M(\C)^2$ and $Y_j=C_j\otimes X_j,$  the following are equivalent.
\begin{enumerate}[(a)]
 \item $X\in \fP;$
 \item the matrix
\[
 T(X):=\begin{pmatrix} Y_1^* & Y_2 \\ Y_2^* & Y_1 \end{pmatrix}
\]
is a strict contraction;
\item \label{i:fP-alt3}
 the matrix
\[
\begin{split}
 \cL^\prime(X)&  :=  \begin{pmatrix} I-Y_1^*Y_1 -Y_2Y_2^* &
      -Y_1^*Y_2-Y_2Y_1^* \\ -Y_2^*Y_1^*-Y_1Y_2^* &
      I-Y_1Y_1^*-Y_2^*Y_2\end{pmatrix} \\
   & = I  -T(X)T(X)^*
\end{split}
\]
 is positive definite;
\item \label{i:fP-alt4}
  The matrix
\[
\begin{split}
 \cL_*^\prime(X) & :=  \begin{pmatrix} I-Y_1Y_1^* -Y_2Y_2^* &
      -Y_1Y_2-Y_2Y_2\\ -Y_2^*Y_1^*-Y_1^*Y_2^* &
      I-Y_1^*Y_1-Y_2^*Y_2\end{pmatrix} \\
   & = I  -T(X)^*T(X)
\end{split}
\]
 is positive definite.
\end{enumerate}
\end{proposition}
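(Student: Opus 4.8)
The plan is to observe that, after a harmless reordering of the four block rows and columns, $\cL(X)$ becomes a $2\times 2$ Hermitian block matrix with identity diagonal blocks and off-diagonal block $T(X)$; all four conditions then fall out of the Schur complement criterion together with elementary facts about contractions. Note that this proposition is really a statement about the pair $Y_1,Y_2$ alone, and that the particular structure of $C_1,C_2$ (or of $R_1,R_2$) plays no role.

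First I would conjugate $\cL(X)$ by the permutation matrix that lists the block coordinates in the order $1,4,2,3$. Reading off the entries of $\cL(X)$, this produces
\[
 \widetilde{\cL}(X) = \begin{pmatrix} I & 0 & Y_1 & Y_2 \\ 0 & I & Y_2^* & Y_1^* \\ Y_1^* & Y_2 & I & 0 \\ Y_2^* & Y_1 & 0 & I \end{pmatrix}
  = \begin{pmatrix} I & T(X)^* \\ T(X) & I \end{pmatrix},
\]
where the lower-left $2\times 2$ block $\left(\begin{smallmatrix} Y_1^* & Y_2\\ Y_2^* & Y_1\end{smallmatrix}\right)$ is exactly $T(X)$ and the upper-right block is its adjoint. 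Since permutation similarity preserves positive definiteness, $X\in\fP$ if and only if $\widetilde{\cL}(X)\succ 0$.

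Next I would apply the Schur complement criterion to $\widetilde{\cL}(X)$. Because both diagonal blocks equal $I\succ 0$, positivity of $\widetilde{\cL}(X)$ is equivalent to positivity of the Schur complement of either diagonal block: complementing the upper-left block gives $I - T(X)T(X)^*\succ 0$, and complementing the lower-right block gives $I - T(X)^*T(X)\succ 0$. Carrying out the two indicated block multiplications identifies these Schur complements with $\cL^\prime(X)$ and $\cL_*^\prime(X)$ respectively, which establishes the equivalences (a)$\Leftrightarrow$(c)$\Leftrightarrow$(d). Finally, since $T(X)T(X)^*$ is positive semidefinite, $I - T(X)T(X)^*\succ 0$ holds precisely when the largest eigenvalue of $T(X)T(X)^*$ is strictly below $1$, i.e.\ when $\|T(X)\|^2 = \|T(X)T(X)^*\| < 1$; this gives the equivalence with (b).

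I do not expect a genuine obstacle here. The only point demanding care is the bookkeeping of the block permutation, so that the lower-left block emerges as $T(X)$ itself rather than a transpose or some other rearrangement; once the $2\times 2$ block form is in hand, the remainder is the standard Schur complement lemma together with the elementary identity $\|T\|^2 = \|TT^*\|$ for the operator norm.
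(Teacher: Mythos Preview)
Your proof is correct and follows essentially the same Schur complement strategy as the paper. The only cosmetic difference is that you first permute $\cL(X)$ into a $2\times 2$ superblock form $\bigl(\begin{smallmatrix} I & T(X)^*\\ T(X) & I\end{smallmatrix}\bigr)$ and take one Schur complement, whereas the paper performs two successive Schur complements (off the $(1,1)$ and then the $(3,3)$ block) and verifies by direct computation that the result equals $I-T(X)T(X)^*$; your organization makes that identification immediate.
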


\begin{proof}
 A  selfadjoint  block square matrix
\[
 M =\begin{pmatrix} A & B\\ B^* & D\end{pmatrix}
\]
 with $A$ positive definite is positive definite
 if and only if the Schur complement of its $(1,1)$ block,
\[
   S=D-B^*A^{-1}B 
\]
 is positive definite. 

 By definition, $X\in \fP$ means $\cL(X)\succ0.$
 Taking the Schur complement of $\cL(X)$ first of the $(1,1)$ block 
 entry then off the $(3,3)$ block entry (an identity matrix) shows
  $\cL(X)\succ 0$ if and only if $\cL^\prime(X)\succ 0$ using 
 the observation at the outset of this proof.  A direct computation 
 shows $\cL^\prime(X)=I-T(X)T(X)^*.$ 

 To complete the proof observe,
  $I-T(X)T(X)^*\succ 0$ if and only if $\|T(X)\|<1$ if and only 
 if $I-T(X)^*T(X)\succ 0$ if and only if $\cL^\prime_*(X)\succ 0.$
\end{proof}

 Specializing to the case of two variables, 
 a spectrahedron $\cD\subseteq M(\C)^2$ is  
 \df{hyper-Reinhardt} if there exists a tuple
 $G=(G_1,G_2)$ of matrices of compatible sizes 
 such that, with
\[
 A_1 =\begin{pmatrix} 0 & G_1 &0\\0&0&0\\0&0&0\end{pmatrix},  \ \
 A_2 =\begin{pmatrix} 0&0&0\\0&0&G_2\\0&0&0\end{pmatrix},
\]
we have $\cD=\cD_A.$ A  hyper-Reinhardt 
 free spectrahedra is  circular, but not necessarily 
 a spectraball.  The automorphisms of hyper-Reinhardt
 free spectrahedra  are determined in \cite{MT, hyper}.

\begin{proposition}
  \label{p:fPp-is-not-a-ball}
 Under the hypotheses of Theorem~\ref{t:main},  the spectrahedron $\fP$ is neither  hyper-Reinhardt nor a spectraball.
\end{proposition}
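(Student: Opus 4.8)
The plan is to prove the two non-degeneracy claims separately, each by exhibiting an obstruction that holds under hypotheses (i) and (ii) of Theorem~\ref{t:main} but cannot hold for a hyper-Reinhardt spectrahedron or for a spectraball, respectively.

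First, $\fP$ is not a spectraball. Using Proposition~\ref{p:fP-alt}, $X=(X_1,X_2)\in\fP$ if and only if the block matrix $T(X)=\bigl(\begin{smallmatrix} Y_1^* & Y_2 \\ Y_2^* & Y_1\end{smallmatrix}\bigr)$, with $Y_j=C_j\otimes X_j$, is a strict contraction. The point is that membership in $\fP[1]$ is \emph{not} governed by a norm condition on a single pencil $\Lambda_G(x)=G_1 x_1+G_2 x_2$: I would compute $\fP[1]$ explicitly. Since $C_1,C_2$ have norm $1$, for scalars $x_1,x_2$ the matrix $T(x)$ has a block structure whose norm one can read off; the resulting region should be something like $\{(x_1,x_2): |x_1|\|C_1\| \text{ and } |x_2|\|C_2\| \text{ combine via a curved constraint}\}$ rather than the polydisc-type or ellipsoidal region a scalar pencil produces. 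More robustly, I would argue at the free level: a spectraball $\cB_G$ has the property that $X\in\cB_G$ iff $tX\in\cB_G$ for all $|t|\le 1$ \emph{and} the defining inequality is a single operator-norm bound, which forces a certain homogeneity/extreme-point structure on $\partial\fP$ that fails here. The cleanest route is probably to invoke Proposition~\ref{p:freecirc} (the characterization of spectraballs from \cite{circular} referenced in the excerpt): if $\fP$ were a spectraball, that characterization would force $C_1^*C_1+C_2^*C_2$ or a related combination to have special structure (e.g. be a projection or scalar), contradicting (ii), which says $\{C_1^*C_1,C_2^*C_2\}$ generate all of $M_s(\CC)$ and hence do not commute in a trivial way. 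So the key step here is to translate "spectraball" into an algebraic condition on $C_1,C_2$ and see it violated by the generation hypothesis (ii) (the non-triviality of $s$ or the non-commutativity of the $C_j^*C_j$).

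Second, $\fP$ is not hyper-Reinhardt. A hyper-Reinhardt spectrahedron $\cD_A$ with the block pencil displayed before the statement has the feature that $x_1$ and $x_2$ are "decoupled": the $(1,2)$-block $G_1 x_1$ and the $(2,3)$-block $G_2 x_2$ never interact, so $L_A(X)\succ0$ is equivalent to a pair of conditions, one involving only $X_1$ and one involving only $X_2$, tied together only through shared rows/columns — concretely, membership reduces to $\|\Lambda\|$-type bounds that are separately convex and, crucially, the "cross terms" $Y_1^*Y_2$ that appear in $\cL'(X)$ of Proposition~\ref{p:fP-alt}\eqref{i:fP-alt3} are absent. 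For $\fP$, the Schur-complement form $\cL'(X)=I-T(X)T(X)^*$ has genuine off-diagonal blocks $-Y_1^*Y_2-Y_2Y_1^*$ that do not vanish (here is where invertibility of $C_1,C_2$ from hypothesis (i) matters: $Y_1^*Y_2=(C_1^*C_2)\otimes(X_1^*X_2)$ is generically nonzero). The strategy is thus: assume $\fP=\cD_A$ for a hyper-Reinhardt $A$ built from some $G=(G_1,G_2)$, derive that $\fP[n]$ must be a set of the form $\{X: \Phi_1(X_1)\succ 0,\ \Phi_2(X_2)\succ0, \dots\}$ with no genuine coupling — or more precisely use that hyper-Reinhardt forces the defining LMI to be equivalent to one with a strictly block-triangular-plus-diagonal structure — and then test on a small tuple (e.g. $n=1$, or nilpotent $2\times2$ tuples as in Subsection~\ref{s:nilp}) where the cross term $C_1^*C_2\otimes X_1^*X_2$ is nonzero, contradicting the decoupled form. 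Invertibility of $C_1$ and $C_2$ guarantees such a witnessing tuple exists.

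The main obstacle I anticipate is making rigorous the claim that "hyper-Reinhardt $\Rightarrow$ decoupled defining inequality" and, similarly, pinning down the exact algebraic signature of "spectraball" — i.e., I need the precise characterizations (presumably Proposition~\ref{p:freecirc} and the structure theory of hyper-Reinhardt domains from \cite{MT,hyper}) stated in a form I can plug $R_1,R_2$ of \eqref{e:R} into. Once those characterizations are in hand, the contradictions should fall out from hypotheses (i) and (ii): invertibility supplies a tuple where the cross term survives (killing hyper-Reinhardt), and the full-generation hypothesis on $\{C_1^*C_1,C_2^*C_2\}$ and $\{C_1C_1^*,C_2C_2^*\}$ prevents the algebraic degeneracy a spectraball would require (killing spectraball). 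I would organize the write-up as: (1) recall/specialize the spectraball characterization, compute the relevant $C$-algebra condition, and note (ii) rules it out; (2) recall the hyper-Reinhardt normal form, exhibit via Proposition~\ref{p:fP-alt}\eqref{i:fP-alt3} a coupled term that (i) makes nonvanishing on a suitable small tuple, and conclude.
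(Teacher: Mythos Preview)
Your plan for the spectraball half is roughly on the right track --- Proposition~\ref{p:freecirc} is indeed the tool --- but it is not yet a proof: you never produce the witnessing pair $(X,U)$. The paper's argument is entirely concrete here. It takes the boundary point $X=\bigl(\begin{smallmatrix}1&0\\0&0\end{smallmatrix},\begin{smallmatrix}0&0\\0&1\end{smallmatrix}\bigr)$ and the swap unitary $U=\begin{smallmatrix}0&1\\1&0\end{smallmatrix}$, then computes $\cL'(UX)$ directly; a diagonal block is $I-C_1^*C_1-C_2C_2^*$, and if $UX$ were in $\overline{\fP}$ this would be $\succeq 0$. Since $\|C_1\|=1$ there is a unit vector $x$ with $C_1^*C_1x=x$, forcing $C_2^*x=0$, which contradicts hypothesis~(i), not~(ii). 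Your suggestion that the generation hypothesis~(ii) is what kills the spectraball case misidentifies the obstruction.

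The hyper-Reinhardt half has a genuine gap. Your claim that a hyper-Reinhardt domain has ``no genuine coupling'' between $X_1$ and $X_2$ is false: with $A_1,A_2$ as displayed before the proposition, $L_A(X)$ is the tridiagonal block matrix with off-diagonals $G_1\otimes X_1$ and $G_2\otimes X_2$ sharing the middle block row/column, so the positivity condition very much mixes $X_1$ and $X_2$ (take the Schur complement of the $(1,1)$ block and you will see terms in both). The cross term $Y_1^*Y_2$ in $\cL'(X)$ is therefore not the discriminant you want. What actually distinguishes hyper-Reinhardt domains is a \emph{symmetry}: if $\cD$ is hyper-Reinhardt then $(W_0^*X_1W_1,\,W_1^*X_2W_2)\in\cD$ for any unitaries $W_0,W_1,W_2$ whenever $X\in\cD$. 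The paper exploits this by taking the same boundary $X$ as above, applying $(W_0,W_1,W_2)=(I,U,U^2)$ to land on $\bigl(\begin{smallmatrix}0&1\\0&0\end{smallmatrix},\begin{smallmatrix}0&1\\0&0\end{smallmatrix}\bigr)$, and then invoking the auxiliary Lemma~\ref{l:CstarC} (that $C_1^*C_1+C_2^*C_2\succ I_s$, a consequence of~(ii)) to see this tuple lies outside $\overline{\fP}$. So both halves of the paper's proof are witnessed by the \emph{same} explicit $2\times 2$ tuple, and the two hypotheses of Theorem~\ref{t:main} are used in opposite halves from what you guessed.
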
 

 The proof of Proposition~\ref{p:fPp-is-not-a-ball} uses
 one direction of the following result from \cite{circular}.
 
\begin{proposition}
 \label{p:freecirc}
 A spectrahedron $\cD_A$ is a spectraball if and only if
 for each positive integer $n,$ each $X\in \cD_A[n]$
 and each unitary matrix $U\in M_n(\C),$ the
 tuple $UX\in \cD_A.$
\end{proposition}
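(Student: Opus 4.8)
The plan is to prove both implications; the ``only if'' direction is routine. Suppose first that $\cD_A = \cB_G$ for some tuple $G$ of matrices. Given $X \in \cD_A[n]$ and a unitary $U \in M_n(\C)$, the identity $\sum_j G_j \otimes (UX_j) = (I \otimes U)\sum_j G_j \otimes X_j$ together with the unitarity of $I\otimes U$ gives $\|\sum_j G_j\otimes UX_j\| = \|\sum_j G_j \otimes X_j\| < 1$, so $UX \in \cB_G[n] = \cD_A[n]$.

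For the converse, assume $\cD_A$ is closed under the maps $X \mapsto UX$ for all unitary $U$. The key step is to upgrade this to closure under left multiplication by an arbitrary contraction: if $X \in \cD_A[n]$ and $C\in M_n(\C)$ with $\|C\| \le 1$, then $CX \in \cD_A[n]$. Indeed, let $\widetilde{U} = \big(\begin{smallmatrix} C & (I - CC^*)^{1/2} \\ (I-C^*C)^{1/2} & -C^* \end{smallmatrix}\big) \in M_{2n}(\C)$, which is unitary by the intertwining identity $C\, g(C^*C) = g(CC^*)\, C$. Since $\cD_A$ is closed under direct sums, $X \oplus X \in \cD_A[2n]$, hence $\widetilde{U}(X\oplus X) \in \cD_A[2n]$ by hypothesis, and the $(1,1)$ corner of $\widetilde{U}(X\oplus X)$ is $CX$. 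Because $L_A(V^* Y V) = (I\otimes V)^* L_A(Y)(I\otimes V)$ for any isometry $V$ and any tuple $Y$, and compressions of positive definite matrices are positive definite, $\cD_A$ is closed under compression to coordinate subspaces; applying this to the $(1,1)$ corner gives $CX \in \cD_A[n]$.

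Next I compare $\cD_A$ with its associated spectraball $\cB_A$ of \eqref{e:specballA}, using that $X \in \cB_A[n]$ precisely when $(\begin{smallmatrix} 0 & X \\ 0 & 0\end{smallmatrix}) \in \cD_A[2n]$ (the tuple with $j$-th entry $(\begin{smallmatrix} 0 & X_j \\ 0 & 0\end{smallmatrix})$), and that $\cB_A$ is the spectraball $\cB_G$ with $G = A$. If $X \in \cD_A[n]$, then $X\oplus X \in \cD_A[2n]$, and left multiplying by the contraction $(\begin{smallmatrix} 0 & I \\ 0 & 0\end{smallmatrix})$ and invoking the previous step produces $(\begin{smallmatrix} 0 & X \\ 0 & 0\end{smallmatrix}) \in \cD_A[2n]$, i.e.\ $X \in \cB_A[n]$; thus $\cD_A \subseteq \cB_A$. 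Conversely, if $X \in \cB_A[n]$ then $(\begin{smallmatrix} 0 & X \\ 0 & 0\end{smallmatrix}) \in \cD_A[2n]$, and left multiplying by the contraction $(\begin{smallmatrix} 0 & 0 \\ I & 0\end{smallmatrix})$ gives $(\begin{smallmatrix} 0 & 0 \\ 0 & X\end{smallmatrix}) \in \cD_A[2n]$; compressing to the $(2,2)$ corner yields $X \in \cD_A[n]$, so $\cB_A \subseteq \cD_A$. Hence $\cD_A = \cB_A$, which is a spectraball.

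The main obstacle is the upgrade from one-sided unitary invariance to one-sided invariance under all contractions: this is precisely what forces $\cD_A$ to coincide with $\cB_A$ rather than merely to contain it or be contained in it (recall these two sets are in general incomparable). Once that is established, the identification $\cD_A = \cB_A$ is a routine matter of multiplying by the nilpotent partial isometries $(\begin{smallmatrix} 0 & I \\ 0 & 0\end{smallmatrix})$ and $(\begin{smallmatrix} 0 & 0 \\ I & 0\end{smallmatrix})$ and passing to corners.
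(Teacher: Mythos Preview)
Your proof is correct, but it takes a longer route than the paper's. The paper's argument for the converse is a single chain of equivalences using one explicit unitary: for any $X\in M_n(\C)^\vg$, with the swap $U=\left(\begin{smallmatrix}0&I_n\\ I_n&0\end{smallmatrix}\right)$ one has
\[
X\in\cD_A \iff 0_n\oplus X\in\cD_A[2n] \iff U(0_n\oplus X)=\begin{pmatrix}0&X\\0&0\end{pmatrix}\in\cD_A[2n] \iff X\in\cB_A[n],
\]
the middle equivalence holding by the hypothesis (applied to $U$ and to $U^{-1}=U$). This avoids entirely your intermediate lemma upgrading one-sided unitary invariance to one-sided contraction invariance.

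Your approach does buy something: the Halmos-dilation step proves the independently interesting fact that a free spectrahedron invariant under $X\mapsto UX$ for unitary $U$ is automatically invariant under $X\mapsto CX$ for all contractions $C$. That is a stronger conclusion than needed here, and could be useful elsewhere. But for this proposition the paper's swap-unitary trick, together with the elementary observation that $0\in\cD_A[n]$ and $L_A(0\oplus X)=I\oplus L_A(X)$, gets to $\cD_A=\cB_A$ in one line. In particular, your remark that the contraction upgrade is ``the main obstacle'' overstates the difficulty: the partial isometries $\left(\begin{smallmatrix}0&I\\0&0\end{smallmatrix}\right)$ and $\left(\begin{smallmatrix}0&0\\I&0\end{smallmatrix}\right)$ you use at the end are already enough on their own once one pairs them with $0\oplus X$ rather than $X\oplus X$, since then left multiplication by the swap (a genuine unitary) does the job.
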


\begin{proof}
  If $\cD_A=\cB_B$ is a spectraball, then it is immediate
 that $X\in \cD_A[n]$ and $U$ unitary implies
 $UX\in \cD_A.$  

 Conversely suppose for each positive integer $n,$ each $X\in \cD_A[n]$
 and each unitary matrix $U\in M_n(\C),$ the
 tuple $UX\in \cD_A[n].$ To prove $\cD_A$ is spectraball, it suffices to show $\cD_A=\cB_A.$ To this end, 
  let $X\in M_n(\C)^\vg$ be given.  Let
\[
 U=\begin{pmatrix} 0&I_n\\I_n &0\end{pmatrix}.
\]
 and observe
 $X\in \cD_A$ if and only if   $0_n\oplus X\in \cD_A[2n]$
 if and only if 
\[
 U (0_n\oplus X)= U\begin{pmatrix} 0&0\\0& X\end{pmatrix}
 =  \begin{pmatrix} 0&X\\0&0\end{pmatrix}\in \cD_A[2n]
\]
if and only if $X\in \cB_A[n]$ (see  Equation~\ref{e:specballA}). 
 Thus $\cD_A=\cB_A$ and the proof is complete.
\end{proof}

The following lemma is also needed for the proof of Proposition~\ref{p:fPp-is-not-a-ball}.

\begin{lemma}
 \label{l:CstarC}
 Under the hypotheses of Theorem~\ref{t:main}, $C_1^*C_1+C_2^*C_2\succ I_s$ and $C_1 C_1^* + C_2 C_2^*\succ I_s.$
\end{lemma}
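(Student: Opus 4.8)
The plan is to exploit the norm-one hypothesis on $C_1,C_2$ together with invertibility and the C$^*$-algebra generating condition~(ii). Fix $j\in\{1,2\}$. Since $\|C_j\|=1$, we have $C_j^*C_j\preceq I_s$ and $C_jC_j^*\preceq I_s$; moreover, since $C_j$ is invertible, $C_j^*C_j$ and $C_jC_j^*$ are both positive definite. Consequently $C_1^*C_1+C_2^*C_2\succ 0$ and, trivially, $C_1^*C_1+C_2^*C_2\succeq C_1^*C_1$, but this only gives $\succeq$-type information; the point is to upgrade the conclusion to a strict inequality against the \emph{identity}, i.e.\ $C_1^*C_1+C_2^*C_2 - I_s\succ 0$.

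The key idea is that equality against $I_s$ would force a rigidity that contradicts~(ii). Suppose, for contradiction, that $C_1^*C_1+C_2^*C_2\not\succ I_s$. Since $C_j^*C_j\preceq I_s$ for each $j$, the positive operator $P:=I_s-C_2^*C_2$ satisfies $0\preceq P\preceq I_s$, and the assumption says $C_1^*C_1\preceq P$ is \emph{not} strict, i.e.\ there is a unit vector $v$ with $\langle (P-C_1^*C_1)v,v\rangle = 0$, hence (by positivity) $(P-C_1^*C_1)v=0$, i.e.\ $\langle C_1^*C_1 v,v\rangle + \langle C_2^*C_2 v,v\rangle = 1 = \|v\|^2$. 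Combined with $\|C_1 v\|\le\|v\|$ and $\|C_2 v\|\le\|v\|$, this pins both norms: actually it forces $\|C_1v\|^2+\|C_2v\|^2 = \|v\|^2$ with each summand constrained. I would then extract a spectral consequence: the vector $v$ must be a common eigenvector (or lie in a common reducing subspace) for $C_1^*C_1$ and $C_2^*C_2$ in a way incompatible with these operators generating all of $M_s(\C)$. More precisely, I expect to show that the set of $v$ achieving the extremal relation is a joint invariant subspace on which $C_1^*C_1 + C_2^*C_2$ acts as the identity, and by condition~(ii) — the C$^*$-algebra generated by $\{C_1^*C_1,C_2^*C_2\}$ is all of $M_s(\C)$, which is irreducible — the only such subspace on which a nontrivial relation like $C_1^*C_1+C_2^*C_2 = I_s$ holds globally is $\{0\}$ unless $s=1$, and the $s=1$ case can be excluded directly (or handled separately). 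The symmetric statement for $C_1C_1^* + C_2C_2^*$ follows by applying the same argument with $C_j$ replaced by $C_j^*$, using that $\{C_1C_1^*,C_2C_2^*\}$ also generates $M_s(\C)$.

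The main obstacle will be cleanly converting the pointwise extremal relation ``$\langle(C_1^*C_1+C_2^*C_2)v,v\rangle=\|v\|^2$ for some $v$'' into a \emph{subspace} statement that contradicts irreducibility. The subtlety is that a single extremal vector need not be an eigenvector of each $C_j^*C_j$ individually — only of the sum. To push it through, I would argue that the maximal subspace $\mathcal M$ on which $C_1^*C_1+C_2^*C_2$ equals $I_s$ is invariant under $C_1^*C_1$ and $C_2^*C_2$: if $w\in\mathcal M$ then from $0\preceq C_j^*C_j\preceq I_s$ and $C_1^*C_1 w + C_2^*C_2 w = w$ one shows $C_j^*C_j w\in\mathcal M$ by a standard argument with the operator inequality $\langle C_j^*C_j x,x\rangle\le\|x\|^2$ applied at $x = w$ and at perturbations $w+tu$ (equality in Cauchy–Schwarz type bounds propagates). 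Once $\mathcal M$ is invariant under the generating set, it is invariant under all of $M_s(\C)$, so $\mathcal M\in\{0,\C^s\}$; the case $\mathcal M=\C^s$ gives $C_1^*C_1+C_2^*C_2=I_s$, contradicting that $\{C_1^*C_1, C_2^*C_2\}$ generates the noncommutative algebra $M_s(\C)$ when $s\ge 2$ (two positive operators summing to $I_s$ commute, generating only a commutative algebra), and for $s=1$ the hypotheses force $|C_1|^2 + |C_2|^2 = 1$ with $|C_1|=|C_2|=1$, which is impossible; hence $\mathcal M = 0$ and the desired strict inequality holds.
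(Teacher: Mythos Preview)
Your argument has a gap at the outset and a non-sequitur at the end. Negating $C_1^*C_1+C_2^*C_2\succ I_s$ yields a unit vector $v$ with $\langle(C_1^*C_1+C_2^*C_2)v,v\rangle\le 1$, not $=1$; you never establish $C_1^*C_1\preceq P$, so you cannot speak of that inequality being ``non-strict,'' and there is no reason your subspace $\mathcal M=\ker(C_1^*C_1+C_2^*C_2-I_s)$ is nonzero. Correspondingly, the closing line ``$\mathcal M=0$, hence the strict inequality holds'' does not follow: $\mathcal M=0$ only says $1$ is not an eigenvalue of the sum, not that every eigenvalue exceeds $1$. The invariance of $\mathcal M$ under each $C_j^*C_j$ that you invoke as a ``standard argument'' is also unjustified and fails for general $0\preceq A,B\preceq I$.

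In fact the strict inequality $\succ I_s$ is too strong to be true. For $s=2$, take $C_1=\operatorname{diag}(1,\epsilon)$ and $C_2$ the positive square root of $U^*\operatorname{diag}(1,\epsilon^2)U$ with $U=\tfrac{1}{\sqrt2}\left(\begin{smallmatrix}1&1\\-1&1\end{smallmatrix}\right)$; for small $\epsilon>0$ all hypotheses of Theorem~\ref{t:main} hold, yet $\det(C_1^*C_1+C_2^*C_2-I_2)<0$, so $C_1^*C_1+C_2^*C_2\not\succ I_2$. What the paper's proof actually establishes, and what its applications use, is the weaker conclusion $C_1^*C_1+C_2^*C_2\not\preceq I_s$. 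The paper's route is also different and much shorter: it works not with the $1$-eigenspace of the sum but with $K=\ker(I_s-C_1^*C_1)$, which is nonzero because $\|C_1\|=1$; assuming $C_1^*C_1+C_2^*C_2\preceq I_s$ forces $C_2^*C_2|_K=0$, so $K$ reduces both $C_1^*C_1$ and $C_2^*C_2$, contradicting that they generate $M_s(\C)$. If you wish to salvage your approach, start from the hypothesis $\preceq I_s$ rather than $\not\succ I_s$, and use an eigenspace of a single $C_j^*C_j$ rather than of the sum.
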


\begin{proof}
 If $C_1^*C_1 +C_2^*C_2\preceq I_s,$  then  the kernel $K$  of $I-C_1^*C_1$ is non-trivial and orthogonal to the range of $C_2^*C_2.$ Hence $K$ reduces both $C_1^*C_1$ and $C_2^*C_2$ and hence the C-star algebra they generate. Thus $C_1^*C_1+C_2^*C_2\succ I_s.$ Likewise $C_1C_1^*+C_2C_2^*\succ I_s.$
\end{proof}

\begin{proof}[{Proof of Proposition~\ref{p:fPp-is-not-a-ball}}]
 A routine computation shows, if $\cD\subseteq M(\C)^\vg$ is a hyper-Reinhardt 
 free spectrahedron, 
  $X\in \cD[n]$ and  $W_0,W_1,W_2\in M_n(\C)$ are unitary matrices, then
\[
 W\cdotb X := (W_0^*XW_1, W_1^*XW_2) \in \cD[n].
\]

 Another routine computation and an appeal to Proposition~\ref{p:fP-alt} shows 
\begin{equation} \label{e:Xnotball}
 X=(X_1,X_2) =
 \left ( \begin{pmatrix} 1&0\\0&0\end{pmatrix}, 
  \begin{pmatrix} 0&0\\0&1\end{pmatrix} \right ) 
\end{equation}
 is in the boundary of $\fP.$  Let
\begin{equation} \label{e:Unotball}
 U=\begin{pmatrix} 0&1\\1&0\end{pmatrix}
\end{equation}
 and $(W_0,W_1,W_2)=(I,U, U^2).$ Thus each $W_j$ is unitary and
\[
 W\cdotb X = \left( \begin{pmatrix} 0&1\\0&0\end{pmatrix},
  \begin{pmatrix}0&1\\0&0\end{pmatrix}\right ).
\]
  By item~\ref{i:fP-alt4} or Proposition~\ref{p:fP-alt} 
 and  Lemma~\ref{l:CstarC}, $W\cdotb X$  is not in the 
 closure of $\fP.$   Hence $\fP$ is not hyper-Reinhardt.

 If $\fP$ is a spectraball,
  $X\in \fP[n]$ and $U$ is an $n\times n$ unitary matrix,
 then $UX=(UX_1,UX_2)$ is also in $\fP[n]$
 by Proposition~\ref{p:freecirc}. Choose
 $X_1,X_2$ and $U$ as in equations~\eqref{e:Xnotball}
 and \eqref{e:Unotball} and note, with 
 $\cL^\prime$ as in Proposition~\ref{p:fP-alt},
\[
 \cL^\prime(UX) = I - \begin{pmatrix}  
   C_1C_1^*+C_2^*C_2 &0&0&0\\0&0&0&0\\
   0&0&0&0\\0&0&0& C_1^*C_1 + C_2C_2^* \end{pmatrix},
\]
 since $X_1^*X_2=0=X_2X_1^*.$ If $UX$ is in the boundary of $\fP,$
 then $I-C_1^*C_1-C_2C_2^*\succeq 0.$ By assumption, there is a vector
 $x$ such that $C_1^*C_1x=x,$  from which it follows that $C_2^*x=0,$
 contradicting the assumption that $C_2$ is invertible.
\end{proof}

\subsection{The spectraball $\hfP$}
\label{s:BR}
 Let $\{e_1,e_2\}$ denote the standard basis for $\CC^2$ and let $e_j^T$ denote the transpose of $e_j.$
 Let $E^r\in M_2(\C)^2$ denote the tuple, \index{$E^r$} \index{$E^c$}
\begin{equation*}
  E^r_1 = e_1^T \otimes C_1= \begin{pmatrix} C_1&0\end{pmatrix}, 
 \ \ \  E^r_2 =e_2^T \otimes C_2  =\begin{pmatrix} 0 & C_2 \end{pmatrix}
\end{equation*}
 and let $E^c\in M_2(\C)$ denote the tuple
 $E^c=(E^c_1,E^c_2)$ where  $E_j^c=e_j\otimes C_j.$

 The spectraball associated to $\fP$
 is $\cB_R,$ where $R$ is defined in equation~\eqref{e:R}.
 Since the first row and last column of $R_j$ are zero,
 $\cB_R=\cB_E,$ where 
\begin{equation}
\label{d:E}
 E=(E_1,E_2)=\left ( \begin{pmatrix} C_1&0&0\\0&0&0\\0&0&C_1\end{pmatrix},
   \begin{pmatrix} 0&C_2&0\\0&0&C_2\\0&0&0\end{pmatrix}
    \right ) = \begin{pmatrix} E^r &0\\0 & E^c \end{pmatrix}.
\end{equation}

\begin{proposition}
 \label{p:fPp-is-a-ball}
   The equality $\hfP=\cB_E= \cB_{E^r}\cap \cB_{E^c}$ holds.
 In particular, $X\in \hfP$ if and only if both
\[
 \begin{pmatrix} C_1\otimes X_1 & C_2\otimes X_2 \end{pmatrix}, \ \ \ 
 \begin{pmatrix} (C_1\otimes X_1)^*&(C_2\otimes X_2)^* \end{pmatrix}
\]
 are strict contractions.
 On the other hand,  under the hypotheses of Theorem~\ref{t:main}, $\cB_{E^j}[2]\not \subseteq \cB_{E^k}[2]$
 for $j,k\in \{r,c\}$ and $j\ne k.$
\end{proposition}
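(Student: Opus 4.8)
The plan is to prove the three assertions in sequence. First, the identity $\hfP = \cB_E$ is a formal consequence of the definitions: since $\fP = \cD_R$, its associated spectraball is $\cB_R$ by Equation~\eqref{e:specballA}, and because the first row and last column of each $R_j$ vanish, deleting them does not change the norm of $\Lambda_R(X)$, so $\cB_R = \cB_E$ with $E$ as in Equation~\eqref{d:E}. For the block-diagonal structure $E = \begin{pmatrix} E^r & 0 \\ 0 & E^c\end{pmatrix}$, I would use that the norm of a block-diagonal matrix is the maximum of the norms of its blocks; hence $\|\Lambda_E(X)\| < 1$ iff $\|\Lambda_{E^r}(X)\| < 1$ and $\|\Lambda_{E^c}(X)\| < 1$, which is exactly $\cB_E = \cB_{E^r}\cap \cB_{E^c}$. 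Unwinding the definitions of $E^r$ and $E^c$: $\Lambda_{E^r}(X) = \begin{pmatrix} C_1\otimes X_1 & C_2\otimes X_2\end{pmatrix}$ (a row of two blocks, from $e_1^T\otimes C_1$ and $e_2^T\otimes C_2$), and $\Lambda_{E^c}(X) = \begin{pmatrix} C_1\otimes X_1 \\ C_2\otimes X_2\end{pmatrix}$, whose norm equals that of its adjoint $\begin{pmatrix} (C_1\otimes X_1)^* & (C_2\otimes X_2)^*\end{pmatrix}$. This gives the stated characterization of membership in $\hfP$ via the two strict-contraction conditions.

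The substantive part is the last sentence: under the hypotheses of Theorem~\ref{t:main}, at level $n=2$ neither $\cB_{E^r}$ nor $\cB_{E^c}$ contains the other. By the symmetry between rows and columns (replacing every matrix by its adjoint interchanges the roles of $E^r$ and $E^c$, and the hypotheses are symmetric under $C_j \mapsto C_j^*$), it suffices to exhibit a single tuple $X \in M_2(\C)^2$ with $X \in \cB_{E^r}[2]$ but $X \notin \cB_{E^c}[2]$; the reverse non-inclusion follows by taking adjoints. The natural candidate is the rank-one nilpotent pair already used in the proof of Proposition~\ref{p:fPp-is-not-a-ball}, namely $X_1 = X_2 = \begin{pmatrix} 0 & 1 \\ 0 & 0\end{pmatrix}$ (or a suitable scalar multiple). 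For such $X$ one has $X_1^* X_1 = X_2^* X_2 = \begin{pmatrix} 0 & 0 \\ 0 & 1\end{pmatrix}$ and $X_1 X_1^* = X_2 X_2^* = \begin{pmatrix} 1 & 0 \\ 0 & 0\end{pmatrix}$, and moreover $X_1 X_2^* = X_2 X_1^* = \begin{pmatrix} 1 & 0 \\ 0 & 0\end{pmatrix}$ while $X_1^* X_2 = X_2^* X_1 = \begin{pmatrix} 0 & 0 \\ 0 & 1\end{pmatrix}$. Then $\Lambda_{E^r}(X)\Lambda_{E^r}(X)^* = C_1 C_1^* \otimes X_1 X_1^* + C_2 C_2^* \otimes X_2 X_2^*$, whereas $\Lambda_{E^c}(X)^*\Lambda_{E^c}(X)$ involves $C_1^* C_1 \otimes X_1^* X_1 + C_1^* C_2 \otimes X_1^* X_2 + C_2^* C_1 \otimes X_2^* X_1 + C_2^* C_2 \otimes X_2^* X_2$, which by the computations above collapses to $(C_1 + C_2)^*(C_1 + C_2) \otimes \begin{pmatrix} 0 & 0 \\ 0 & 1\end{pmatrix}$ — and more care is needed here: I should instead scale $X$ so that the $E^r$-side strictly stays inside the ball while the $E^c$-side does not.

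Here is the clean way to run it. Observe $\|\Lambda_{E^r}(X)\|^2 = \|C_1 C_1^* \otimes X_1 X_1^* + C_2 C_2^* \otimes X_2 X_2^*\| = \|C_1 C_1^* + C_2 C_2^*\|$ and, computing the analogous product for $E^c$ with the cross terms retained, $\|\Lambda_{E^c}(X)\|^2$ equals $\|(C_1+C_2)(C_1+C_2)^*\|$ or a similar non-degenerate expression — the key point being that the $E^c$ quadratic form does not separate the two summands, so its norm is governed by $C_1 + C_2$ (up to phases) rather than by $C_1 C_1^* + C_2 C_2^*$. The main obstacle, and the step requiring genuine care, is showing these two norms are genuinely different for some choice of $X$: this is where invertibility of $C_1, C_2$ and the C-star generation hypotheses must enter, presumably to rule out the degenerate coincidence $\|C_1 C_1^* + C_2 C_2^*\| = \|$ (the relevant $E^c$-norm)$\|$. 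Concretely, I would argue by contradiction: if $\cB_{E^r}[2]\subseteq \cB_{E^c}[2]$ then for every unit vector $\xi$ and every choice of unimodular $\gamma$ the contraction condition on $C_1C_1^* + C_2 C_2^*$ forces a contraction condition on $\gamma_1 C_1 + \gamma_2 C_2$-type combinations; varying $\gamma$ and using that the C-star algebra generated by $\{C_1 C_1^*, C_2 C_2^*\}$ is all of $M_s(\C)$, one extracts a vector $x$ with $C_1 C_1^* x = C_2 C_2^* x$ both equal to $\|C_1C_1^*+C_2C_2^*\|\,x/2$ and simultaneously a contradiction with a rank/support condition coming from invertibility — exactly the mechanism of Lemma~\ref{l:CstarC} and the end of the proof of Proposition~\ref{p:fPp-is-not-a-ball}. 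Mirroring via adjoints with the hypothesis on $\{C_1^* C_1, C_2^* C_2\}$ handles $\cB_{E^c}[2] \not\subseteq \cB_{E^r}[2]$, completing the proof.
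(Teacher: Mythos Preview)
Your treatment of the equalities $\hfP=\cB_E=\cB_{E^r}\cap\cB_{E^c}$ and of the strict-contraction characterization is correct and matches the paper.

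The non-inclusion argument, however, has a genuine gap. First a computational slip: since $\Lambda_{E^c}(X)=\begin{pmatrix} C_1\otimes X_1\\ C_2\otimes X_2\end{pmatrix}$ is a \emph{column}, the product $\Lambda_{E^c}(X)^*\Lambda_{E^c}(X)$ has \emph{no} cross terms; it equals $C_1^*C_1\otimes X_1^*X_1+C_2^*C_2\otimes X_2^*X_2$. With your choice $X_1=X_2=\left(\begin{smallmatrix}0&1\\0&0\end{smallmatrix}\right)$ this collapses to $(C_1^*C_1+C_2^*C_2)\otimes\left(\begin{smallmatrix}0&0\\0&1\end{smallmatrix}\right)$, so $\|\Lambda_{E^c}(X)\|^2=\|C_1^*C_1+C_2^*C_2\|$. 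On the $E^r$ side the analogous computation gives $\|\Lambda_{E^r}(X)\|^2=\|C_1C_1^*+C_2C_2^*\|$. Nothing in the hypotheses of Theorem~\ref{t:main} forces these two norms to differ (and they coincide, e.g., whenever $C_1,C_2$ are normal), so after scaling by $\rho$ your tuple lies either in both balls or in neither; no non-inclusion is detected. The $(C_1+C_2)^*(C_1+C_2)$ expression you mention never arises, and the closing contradiction sketch supplies no concrete mechanism to separate the two sides.

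The paper's remedy is to choose $X_1,X_2$ so that the two sides decouple \emph{differently}: take $X_1=\left(\begin{smallmatrix}0&1\\0&0\end{smallmatrix}\right)$ and $X_2=\left(\begin{smallmatrix}0&0\\0&1\end{smallmatrix}\right)$. Then $X_1X_1^*$ and $X_2X_2^*$ are \emph{orthogonal} rank-one projections, so $\|\Lambda_{E^r}(\rho X)\|^2=\rho^2\max(\|C_1\|^2,\|C_2\|^2)=\rho^2<1$, whereas $X_1^*X_1=X_2^*X_2$ is the \emph{same} projection, giving $\|\Lambda_{E^c}(\rho X)\|^2=\rho^2\|C_1^*C_1+C_2^*C_2\|$. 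Lemma~\ref{l:CstarC} yields $C_1^*C_1+C_2^*C_2\succ I$, so one may pick $\rho<1$ with $\rho^2\|C_1^*C_1+C_2^*C_2\|\ge 1$, placing $\rho X$ in $\cB_{E^r}[2]\setminus\cB_{E^c}[2]$. The reverse non-inclusion uses the transposed pair $X_1=\left(\begin{smallmatrix}1&0\\0&0\end{smallmatrix}\right)$, $X_2=\left(\begin{smallmatrix}0&1\\0&0\end{smallmatrix}\right)$ together with $C_1C_1^*+C_2C_2^*\succ I$.
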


\begin{proof}
 The equalities $\hfP=\cB_E= \cB_{E^r}\cap \cB_{E^c}$ are  immediate from equation~\eqref{d:E}.
 
 Using Lemma~\ref{l:CstarC}, 
 choose $\rho<1$ such that $\rho^2(C_1^*C_1+C_2^*C_2)\not \preceq I.$
 The tuple 
\[
 X =\rho  \left ( 
   \begin{pmatrix} 0&1\\0&0\end{pmatrix}, \begin{pmatrix}
     0&0\\0&1\end{pmatrix} \right )
\]
 is in  $\cB_{E^r}$ but not $\cB_{E^c}.$ 
 Hence $\cB_{E^r}[2]\not\subseteq \cB_{E^c}[2].$
 A similar argument with
\[
 X =\rho  \left ( 
   \begin{pmatrix} 1&0\\0&0\end{pmatrix}, \begin{pmatrix}
     0&1\\0&0\end{pmatrix} \right )
\]
 shows $\cB_{E^c}[2]\not\subseteq \cB_{E^r}[2].$
\end{proof}

\begin{remark}\rm
 In the terminology of \cite{bestmaps}, the direct sum
 $E^r\oplus E^c$ is {\it irredundant}. 
 \qed
\end{remark}

\subsection{Ball minimality}
\label{s:bmin}
 The discussion of ball-minimality here is borrowed
 from \cite{bestmaps}. In particular, Lemma~\ref{l:bmin} below is
 excerpted from \cite[Lemma~3.1]{bestmaps}.
A  $\vg$-tuple $E=(\gG_1,\dots,\gG_\vg)$ of $d\times e$ matrices is
\df{ball-mimimal} for $\cB_\gG$ if $F\in M_{k\times \ell}$ and
 $\cB_F=\cB_\gG$ implies $d\le k$ and $e\le \ell.$
 As examples,  it is
 immediate that both $E^r$ and $E^c$ are ball-minimal
 (for $\cB_{E^r}$ and $\cB_{E^c}$ respectively). 
 Tuples $\gG,F\in M_{d,e}(\C)^\vg$ are \df{ball-equivalent}
 if there exists unitary matrices $U\in M_{e,e}(\C)$
 and $V\in M_{d,d}(\C)$ such that $F=V^* \gG U.$
 Observe that, in this case, $\cB_F=\cB_\gG.$
 A tuple $A\in M_d(\C)^\vg$ is minimal for $\cD_A$ if $B\in M_k(\C)^\vg$ and $\cD_B=cD_A$ implies $d\le k.$

\begin{lemma}
 \label{l:bmin}
   Ball minimal tuples exists; that is, if $\cB\subseteq M(\C)^\vg$ is a spectraball, then there exists $d,e$ and  a ball minimal tuple  
  $\gG\in M_{d,e}(\C)^\vg$ such that $\cB=\cB_\gG.$

 Let $\gG\in M_{d,e}(\C)^\vg$ be given. 
\begin{enumerate}[(i)]
\item  If  $\gG$ ball minimal, $F\in M_{k\times \ell}(\C)^\vg$ 
and $\cB_\gG=\cB_F$, then there
 is a tuple $J\in M_{(k-d)\times (\ell-e)}(\C)^\vg$
   and unitaries $U,V$  of sizes $k\times k$ and $\ell\times \ell$
 respectively such that
 $\cB_\gG\subseteq \cB_J$ and 
\begin{equation*}
 F = U\begin{pmatrix} \gG & 0 \\ 0 & J \end{pmatrix} V.
\end{equation*}
In particular,
 \begin{enumerate}[\rm (a)]
  \item $d\le k$ and $e\le \ell$;
 \item  if $F\in M_{d\times e}(\C)^\vg$ is ball minimal too, then $\gG$ and $F$ are ball-equivalent.
  \end{enumerate}
\item   $\gG\in M_{d,e}(\C)^\vg$ is ball minimal if and only if 
\[
  H=\begin{pmatrix} 0 & \gG\\0&0\end{pmatrix} \in M_{d+e}(\C)^\vg
\]
 is minimal for $\cD_H.$
\end{enumerate}
\end{lemma}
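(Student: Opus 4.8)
\textbf{Proof proposal for Lemma~\ref{l:bmin}.}

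The plan is to establish the three assertions in the order stated, leaning on a standard state-space / realization dictionary between spectraballs $\cB_\gG$ and the free spectrahedra $\cD_H$ attached to the nilpotent pencil $H = \left(\begin{smallmatrix} 0 & \gG\\0&0\end{smallmatrix}\right)$. First, for the \emph{existence} of ball minimal tuples: given a spectraball $\cB = \cB_\gG$ for some $\gG\in M_{d,e}(\C)^\vg$, the pairs $(k,\ell)$ with $k\le d$, $\ell\le e$ and $\cB_F = \cB$ for some $F\in M_{k\times\ell}(\C)^\vg$ form a nonempty set of pairs of positive integers; choosing one that is minimal in, say, $k+\ell$ (or minimal coordinatewise after two successive reductions) produces a ball minimal tuple. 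The only thing to check is that the definition of ball minimality — requiring $d\le k$ \emph{and} $e\le\ell$ for every competitor $F\in M_{k\times\ell}$ — is actually achieved; this follows once we have part~(i)(a), so logically I would prove the reduction statement of part~(i) first and then harvest existence from it.

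The heart of the lemma is part~(i): if $\gG$ is ball minimal and $\cB_\gG = \cB_F$ with $F\in M_{k\times\ell}(\C)^\vg$, then $F$ is unitarily equivalent to $\left(\begin{smallmatrix}\gG & 0\\0 & J\end{smallmatrix}\right)$ with $\cB_\gG\subseteq\cB_J$. The approach is to use the \emph{defining-polynomial / pencil-restriction} machinery: $\cB_\gG = \cB_F$ means $\|\Lambda_\gG(X)\| < 1 \iff \|\Lambda_F(X)\| < 1$ for all $X$, equivalently $I - \Lambda_\gG(X)^*\Lambda_\gG(X)\succ 0 \iff I - \Lambda_F(X)^*\Lambda_F(X)\succ 0$. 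One extracts from the boundary behavior (testing on the ``free boundary'' of $\cB_\gG$, i.e.\ tuples $X$ where $\Lambda_\gG(X)$ has a unit vector in its kernel of $I - \Lambda_\gG(X)^*\Lambda_\gG(X)$) a linear map intertwining the column spaces of the coefficient tuples. Concretely, the matrices $\begin{pmatrix}\gG_1 & \cdots & \gG_\vg\end{pmatrix}$ and $\begin{pmatrix}F_1 & \cdots & F_\vg\end{pmatrix}$ (suitably arranged) must be related by isometries on both sides: there are isometries $V_0:\C^d\hookrightarrow\C^k$ and $U_0:\C^e\hookrightarrow\C^\ell$ with $V_0^*F_j U_0 = \gG_j$, and the orthogonal complements assemble into the block $J$, with $\cB_\gG\subseteq\cB_J$ forced because dropping the $\gG$-block can only enlarge the ball. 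Completing the isometries to unitaries $U,V$ gives the stated block form. Once this is in hand, (a) is immediate from the shapes of $V_0, U_0$, and (b) follows because if $F$ is also $d\times e$ then $J$ is $0\times 0$, so $F = V\gG U$ with $U,V$ unitary, i.e.\ $\gG$ and $F$ are ball-equivalent.

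For part~(ii), the dictionary $\gG\leftrightarrow H = \left(\begin{smallmatrix}0 & \gG\\0&0\end{smallmatrix}\right)$ already recorded in the excerpt ($\cB_\gG = \cD_H$ via $L_H(X)\succ0 \iff \|\Lambda_\gG(X)\|<1$, cf.\ the line $\cB_G = \cD_B$ for $B = \left(\begin{smallmatrix}0&G\\0&0\end{smallmatrix}\right)$) reduces the claim to: $\gG$ ball minimal $\iff$ $H$ minimal for $\cD_H$ in the sense that no smaller-sized tuple defines the same free spectrahedron. The forward direction: if $H$ were not minimal there is $B\in M_p(\C)^\vg$ with $p < d+e$ and $\cD_B = \cD_H = \cB_\gG$; one then argues (using that $\cD_H$ is the spectraball $\cB_\gG$, hence circular, and invoking the structure of minimal defining tuples of spectraballs) that $B$ may be taken of the form $\left(\begin{smallmatrix}0 & F\\0&0\end{smallmatrix}\right)$ with $F\in M_{k\times\ell}$, $k+\ell = p < d+e$ and $\cB_F = \cB_\gG$, contradicting ball minimality of $\gG$ in at least one coordinate. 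The converse is the same implication run backwards. The main obstacle I anticipate is the boundary-extraction argument at the core of part~(i): making rigorous the passage from the equality of the two spectraballs to the existence of the intertwining isometries — i.e.\ showing that every ``extreme direction'' of the $\gG$-pencil is matched by one of the $F$-pencil and vice versa, with the matching implemented by a single pair of linear maps independent of the test point. This is precisely where one must cite or reprove the relevant lemma from \cite{bestmaps}; since the statement says Lemma~\ref{l:bmin} is excerpted from \cite[Lemma~3.1]{bestmaps}, I would import that argument verbatim rather than reconstruct it, and devote the write-up to verifying the (ii)-direction translation and the existence bookkeeping, which are comparatively routine.
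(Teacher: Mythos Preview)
The paper does not actually prove Lemma~\ref{l:bmin}; it is quoted verbatim from \cite[Lemma~3.1]{bestmaps} with the explicit remark ``Lemma~\ref{l:bmin} below is excerpted from \cite[Lemma~3.1]{bestmaps},'' and no argument is supplied. So there is no in-paper proof to compare your proposal against, and you yourself correctly observe at the end that the honest move is to import the argument from \cite{bestmaps} rather than reconstruct it.

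That said, one remark on your sketch: your plan for \emph{existence} is slightly circular as written. You propose to pick a competitor of minimal total size and then invoke part~(i)(a) to conclude it is ball minimal, but part~(i) is stated only for a tuple $\gG$ already assumed ball minimal, so it cannot be applied to an arbitrary size-minimizer without further work. The standard route (and the one implicit in \cite{bestmaps}) goes through part~(ii) instead: one first uses the well-developed theory of \emph{minimal defining tuples} for free spectrahedra $\cD_H$ --- where existence and the block-diagonal reduction are known via the Arveson-type boundary/C$^*$-algebra arguments --- and then transports everything to the spectraball side via the dictionary $\gG \leftrightarrow H=\left(\begin{smallmatrix}0&\gG\\0&0\end{smallmatrix}\right)$. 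This simultaneously gives existence and the structural statement~(i) without the bootstrap you are attempting. Your identification of the ``boundary-extraction'' step as the crux is accurate; that is exactly where the work in \cite{bestmaps} lies.
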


\begin{proposition}
 \label{p:E-is-ball-min}
 The tuple $E$ from equation~\eqref{d:E} is ball-minimal.
\end{proposition}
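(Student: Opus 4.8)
The plan is to show that $E = E^r \oplus E^c$ is ball-minimal by combining the ball-minimality of $E^r$ and of $E^c$ (which is immediate, as noted in the paragraph before Lemma~\ref{l:bmin}) with the decomposition result in Lemma~\ref{l:bmin}(i). Recall $E \in M_{3,3}(\C)^2$. Suppose, toward a contradiction, that $E$ is not ball-minimal. By Lemma~\ref{l:bmin}, there is a ball-minimal tuple $F \in M_{k,\ell}(\C)^2$ with $\cB_F = \cB_E$ and with $k \le 3$, $\ell \le 3$, and at least one of the inequalities strict.

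First I would apply Lemma~\ref{l:bmin}(i) to the ball-minimal tuple $E^r \in M_{1,2}(\C)^2$: since $\cB_{E^r} \supseteq \cB_E = \cB_F$ is \emph{not} what the lemma needs — rather I should note $\cB_E \subseteq \cB_{E^r}$, and I want to run the lemma with $F$ playing the role of the comparison tuple. So instead: apply Lemma~\ref{l:bmin}(i) with the ball-minimal tuple being $E^r$ and the other tuple being $F$, which requires $\cB_{E^r} = \cB_F$; but this fails since $\cB_{E^r} \ne \cB_E$ in general. The correct move is to work directly with $E$. Since $E$ is a direct sum $E^r \oplus E^c$ with $\cB_E = \cB_{E^r} \cap \cB_{E^c}$ (Proposition~\ref{p:fPp-is-a-ball}), and since $\cB_{E^r}[2] \not\subseteq \cB_{E^c}[2]$ and $\cB_{E^c}[2] \not\subseteq \cB_{E^r}[2]$, neither summand is redundant — this is exactly the irredundancy noted in the Remark after Proposition~\ref{p:fPp-is-a-ball}. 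The key step is therefore to invoke (or re-derive) the fact from \cite{bestmaps} that an irredundant direct sum of ball-minimal tuples is ball-minimal. Concretely: given $F$ with $\cB_F = \cB_E$ and $F$ ball-minimal, Lemma~\ref{l:bmin}(i) applied with $\gG = F$ (ball-minimal) and the tuple $E$ produces a decomposition $E = U\begin{pmatrix} F & 0 \\ 0 & J\end{pmatrix} V$ with $\cB_F \subseteq \cB_J$; comparing the sizes $3 = k + (3-k)$ on each factor and using $k \le 3$, $\ell \le 3$ forces the block $J$ to have a nonzero ``size,'' and one must show this is incompatible with the irredundancy of $E^r \oplus E^c$.

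The cleanest route, which I would actually write out, is: let $F \in M_{k,\ell}(\C)^2$ be ball-minimal with $\cB_F = \cB_E$; by Lemma~\ref{l:bmin}(i)(a) applied to each of $E^r$ and $E^c$ — noting $\cB_{E^r} \supseteq \cB_F$ need not hold, so instead I use that $F$ ball-minimal and $\cB_F = \cB_E \subseteq \cB_{E^r}$ forces, by minimality of $F$ among tuples with ball equal to $\cB_F$... — no. The honest statement is that one needs the converse direction of Lemma~\ref{l:bmin}(i): apply it with $\gG = E^r$ ball-minimal. For that we'd need a tuple whose ball equals $\cB_{E^r}$, which we don't have. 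So the genuinely necessary input is the direct-sum/irredundancy proposition from \cite{bestmaps}; I would state it as a cited lemma (``an irredundant finite direct sum of ball-minimal tuples is ball-minimal'') and then the proof is just: $E = E^r \oplus E^c$, both summands are ball-minimal, and the direct sum is irredundant by the Remark following Proposition~\ref{p:fPp-is-a-ball}, hence $E$ is ball-minimal.

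\textbf{Main obstacle.} The crux is that ball-minimality of a direct sum does not follow formally from ball-minimality of the summands — it genuinely requires irredundancy, and the irredundancy here rests on the level-$2$ incomparabilities $\cB_{E^r}[2] \not\subseteq \cB_{E^c}[2]$ and vice versa established in Proposition~\ref{p:fPp-is-a-ball}, which in turn rely on Lemma~\ref{l:CstarC} and hence on both hypotheses of Theorem~\ref{t:main}. So the proof, while short, is where the algebraic hypotheses on $C_1, C_2$ are actually consumed, and I would make sure to flag explicitly that the invertibility of $C_2$ (and the C-star algebra generation) is what makes the two sub-balls incomparable and thus makes neither summand discardable. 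Everything else is bookkeeping with Lemma~\ref{l:bmin}.
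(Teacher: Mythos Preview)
Your proposal takes a genuinely different route from the paper, and it has a real gap.

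The paper does \emph{not} argue by assembling $E=E^r\oplus E^c$ from ball-minimal pieces. Instead it passes via Lemma~\ref{l:bmin}(ii) to the square tuple $H=\begin{pmatrix}0&E\\0&0\end{pmatrix}\in M_{6s}(\C)^2$, replaces $H$ by a unitarily equivalent tuple $F$, and then computes the $C^*$-algebra generated by $\{F_1,F_2\}$ explicitly, showing it equals $M_{3s}(\C)\oplus M_{3s}(\C)$. This computation is exactly where both hypotheses of Theorem~\ref{t:main} (invertibility of the $C_j$ and the two $C^*$-generation conditions) are spent. From the $C^*$-algebra description one reads off that the only nontrivial reducing subspaces are the two obvious $\C^{3s}$ summands; an appeal to \cite[Proposition~2.2]{circular} then forces any minimal defining tuple to be either all of $F$ or one of the two summands, and irredundancy (Proposition~\ref{p:fPp-is-a-ball}) rules out the latter.

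Your plan hinges on the black-box statement ``an irredundant finite direct sum of ball-minimal tuples is ball-minimal,'' which you neither prove nor pin down in \cite{bestmaps}. Your own attempts to extract it from Lemma~\ref{l:bmin}(i) fail, as you note: applying that lemma with a ball-minimal $F$ for $\cB_E$ gives $E=U(F\oplus J)V^*$, but the unitaries need not respect the $E^r\oplus E^c$ block structure, so you cannot directly conclude $J=0$ from the irredundancy of that particular decomposition. This is precisely the step the paper's $C^*$-algebra computation is designed to handle: it pins down \emph{all} reducing subspaces, not just the ones coming from the given block splitting. Without either that computation or a precise external citation, your argument is incomplete.

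Two smaller points. First, $E\in M_{3s,3s}(\C)^2$, not $M_{3,3}(\C)^2$. Second, your claim that the hypotheses of Theorem~\ref{t:main} are consumed solely in establishing irredundancy is not right: the ball-minimality of $E^r$ and $E^c$ individually also requires them (e.g.\ if $C_1=C_2=I_s$ with $s>1$ then $E^r$ defines the row ball, whose ball-minimal tuple has size $1\times 2$, not $s\times 2s$). So even the ``immediate'' inputs you take for granted already depend on the algebraic assumptions.
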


\begin{proof}
 Let 
\[
 H=\begin{pmatrix} 0 & E \\ 0 & 0 \end{pmatrix} \in M_{6s,6s}.
\]
 Thus $\cB_E=\cD_H$ and, by Lemma~\ref{l:bmin}, $E$ is ball minimal for $\cB_E$ if and only if $H$ is minimal for $\cD_H.$

 Let $\{\EE_{j,k}: 1\le j\le 6\}$ denote the matrix units
 for $M_6(\CC).$  Let
\[
 F_1 = [\EE_{1,2} + \EE_{5,6}]\otimes C_1, \ \ \
 F_2 =[\EE_{1,3}+\EE_{4,6}]\otimes C_2.
\]
Since $F$ and $H$ are unitarily equivalent, we have $\cD_H=\cD_F$ and moreover $H$ is minimal defining for $\cD_H$ if and only if $F$ is minimal for $\cD_F$ if and only if $E$ is ball minimal for $\cB_E.$

 Let $\cF$ denote the C-star algebra generated by
 $\{F_1,F_2\}.$ 
 Since $F_1F_1^* = [\EE_{1,1}+ \EE_{5,5}]\otimes C_1C_1^*$
 and $F_2F_2^*=[\EE_{1,1} +\EE_{4,4}]\otimes C_2C_2^*$
 and since, by assumption, $\{C_1C_1^*,C_2C_2^*\}$
 generates $M_s(\CC)$ as a C-star algebra, 
 for each $X\in M_s(\CC)$ there exist $Y_1,Y_2\in M_s(\CC)$
 such that $\cF$ contains 
 $\EE_{1,1}\otimes X + \EE_{4,4}\otimes Y_2   
  +\EE_{5,5} \otimes  Y_1.$
 Multiplying by $F_1F_1^*\,F_2F_2^*$ on the right, it follows that
 $\cF$ contains $\EE_{1,1}\otimes X C_1C_1^*C_2C_2^*.$
 Since $C_1C_1^* C_2C_2^*$ is invertible, it follows that $\cF$
 contains $\EE_{1,1} \otimes M_s(\CC).$  By considering
 $F_j^* \, [\EE_{1,1} \otimes M_s(\CC)] \, F_k,$
 for $j,k=0,1,2$ and $F_0=I,$ and arguing as above,
 it follows that $\cF$ contains 
 $\sum_{j,k=1}^3 \EE_{j,k}\otimes M_s(\CC);$ that
 is $\cF$ contains $\cF_1=M_{3s}(\CC)\oplus 0$ as a
 C-star subalgebra. By considering instead $F_j^*F_j$
 and using $\{C_1^*C_1,C_2^*C_2\}$ generates
 $M_s(\CC)$ as a C-star algebra,
 it follows that $\cF$ contains  $\cF_2=0\oplus M_{3s}(\CC)$
 as a C-star subalgebra. Hence $\cF=M_{3s}(\CC)\oplus M_{3s}(\CC).$
 In particular, aside from the trivial ones,
  the only reducing subspaces for $\cF,$
 equivalently, $\{F_1,F_2\},$ 
 are $\CC^{3s}\oplus\{0\}$ and $\{0\}\oplus \CC^{3s}.$

 Suppose $A\in M_k(\CC)^2$
 is \df{minimal defining} for $\cD_F.$ Thus $k\le 6s.$
 By \cite[Proposition~2.2]{circular}, there exist
 a tuple $J\in M(\CC)^{\vg}$ such that, up to unitary
 equivalence, $F=A\oplus J.$  Suppose $A\ne F.$
  It follows that either
 $A=(\EE_{1,2}\otimes C_1, \EE_{1,3} \otimes C_2)$
 or $A=(\EE_{4,6}\otimes C_1, \EE_{4,6}\otimes C_2).$
  But then, $\cD_A$ is either $\cB_{E^r}$ or
 $\cB_{E^c},$ contradicting the conclusion
 of Proposition~\ref{p:E-is-ball-min} 
 ($\{\cB_{E^r},\cB_{E^c}\}$ is irredundant).  Thus $F$ is minimal for $\cD_F$ and $E$ is ball minimal.
\end{proof}

\subsection{Nilpotent evaluations}
\label{s:nilp}
  Given $\delta>0,$ let
\[
B(0,\delta) =\{X\in M(\C)^\vg: \sum_{j=1}^\vg X_jX_j^*  <\delta^2\}
\]
 It is straightforward to verify $B(0,\delta)$ is an open
 free set.

 A (formal) \df{power series} $F$  in $\xx$ is an expression of the form
\begin{equation}
 \label{d:pow}
 F(x) = \sum_{w\in \xx} F_w w.
\end{equation}

 A tuple $X\in M(\C)^\vg$ is \df{nilpotent of order at most $m$}
 if $X^w=0$ for all words $w$  of length $m.$ 
 Theorem~\ref{t:pow-series} below
  can be found in \cite{KVV,KS,billvolume} among
 other places.

\begin{theorem}
 \label{t:pow-series}
 Suppose $S$ is an open free set and there is a $\delta>0$
 such that $B(0,\delta) \subseteq S.$ If $f:S\to M(\C)$
 is a free analytic function, then  there exists a formal power 
 series as in equation~\eqref{d:pow} such that if $X\in B(0,\delta),$
 then 
\[
 f(X)=F(X) =\sum_{\ell=0}^\infty \sum_{|w|=\ell} F_w X^w,
\]
 with the series converging in norm.  Further, $f$ extends
 analytically to all nilpotent tuples. In particular,
 if $X$ is nilpotent of order at most $m,$ then
\[
 f(zX) = \sum_{\ell=0}^m (\sum_{|w|=\ell} F_w X^w) z^\ell
\]
 for $z\in \CC.$
\end{theorem}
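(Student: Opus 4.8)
The plan is to reduce to one complex variable by passing to the scalar slices $z\mapsto f(zX)$, to expand each slice in a Taylor series in $z$, to recognize the homogeneous coefficient functions as free polynomials (this is the Taylor--Taylor expansion, as in \cite{KVV,KS,billvolume}), and finally to read off the extension to nilpotent tuples from the fact that the resulting series terminates on them. The only genuinely substantive step is the middle one.

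First I would fix $n$ and a point $X\in B(0,\delta)[n]$. From $\sum_j X_jX_j^*<\delta^2$ one gets an $r>1$ with $zX\in B(0,\delta)\subseteq S$ for all $|z|<r$, so $z\mapsto f(zX)$ is an $M_n(\C)$-valued analytic function on $\{|z|<r\}$ and hence has a norm-convergent Taylor expansion
\[
 f(zX)=\sum_{\ell=0}^{\infty} f_\ell(X)\,z^\ell,\qquad
 f_\ell(X)=\frac{1}{2\pi i}\int_{|z|=1}\frac{f(zX)}{z^{\ell+1}}\,dz ,
\]
valid (and absolutely norm-convergent) for $|z|\le 1$; in particular $\sum_\ell\|f_\ell(X)\|<\infty$ and $f(X)=\sum_\ell f_\ell(X)$. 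Differentiating under the integral shows each $f_\ell[n]$ is analytic; the direct-sum and unitary covariance of $f$ passes through the integral to $f_\ell$; and the substitution $z\mapsto\lambda z$ yields $f_\ell(\lambda X)=\lambda^\ell f_\ell(X)$. Thus each $f_\ell\colon B(0,\delta)\to M(\C)$ is itself a free analytic function, homogeneous of degree $\ell$.

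The crux is to show that each $f_\ell$ is given by a homogeneous free polynomial: there should exist scalars $(F_w)_{|w|=\ell}$, independent of $n$, with $f_\ell(X)=\sum_{|w|=\ell}F_wX^w$ for all $X$. I would obtain this by invoking that $f$, hence $f_\ell$, respects intertwinings (Proposition~\ref{p:free-fun-alt}) and running the standard argument that an analytic free function homogeneous of degree $\ell$ is a homogeneous free polynomial of degree $\ell$ --- either via the nc difference--differential calculus as in \cite{KVV,KS}, or concretely: uniqueness of Taylor coefficients already makes each $f_\ell[n]$ a matrix-valued homogeneous degree-$\ell$ polynomial in the entries of $X$, and covariance under direct sums and unitaries at all matrix sizes forces this family to lie in the span of the monomial maps $X\mapsto X^w$ with $|w|=\ell$. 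The individual coefficients are then read off by evaluating on nilpotent ``word tuples'': for $w=x_{j_1}\cdots x_{j_\ell}$ let $N^{(w)}\in M_{\ell+1}(\C)^\vg$ have $i$-th entry $\sum_{k\,:\,j_k=i}\EE_{k,k+1}$, so that $(N^{(w)})^v=\delta_{v,w}\,\EE_{1,\ell+1}$ for every word $v$ of length $\ell$ and $(N^{(w)})^v$ has zero $(1,\ell+1)$-entry for $|v|<\ell$; since $tN^{(w)}\in B(0,\delta)$ for $|t|$ small, $F_w$ is the $(1,\ell+1)$-entry of $f_\ell(N^{(w)})$. I expect this rigidity step --- promoting ``homogeneous analytic free function'' to ``homogeneous free polynomial'' --- to be the only real obstacle; it is precisely where one uses that $f$ is defined at all matrix sizes simultaneously, and it can be imported from \cite{KVV,KS,billvolume} or deduced from Proposition~\ref{p:free-fun-alt}.

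Granting this, for $X\in B(0,\delta)$ one has $f(X)=\sum_\ell\sum_{|w|=\ell}F_wX^w$ with norm convergence, which is the power series $F$ of \eqref{d:pow}. For the remaining assertions, if $X$ is nilpotent of order at most $m$ then $X^w=0$ whenever $|w|\ge m$, so $\sum_wF_wX^w$ collapses to the finite sum $\sum_{|w|<m}F_wX^w$; I would declare the value of the extended $f$ at such an $X$ to be this sum. On the free set of nilpotent tuples of order at most $m$ this is literally the free polynomial $\sum_{|w|<m}F_wx^w$, hence a free analytic function, and letting $m$ vary produces a free analytic extension of $f$ to all nilpotent tuples; it agrees with the original $f$ near $0$ and, more generally, wherever both are defined, by applying the identity theorem to $z\mapsto f(zX)$ on the connected component of $0$ in $\{z\in\C : zX\in S\}$, a set which contains a disc on which $zX\in B(0,\delta)$ and the two slices already coincide as polynomials in $z$. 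The displayed formula $f(zX)=\sum_{\ell=0}^{m}\big(\sum_{|w|=\ell}F_wX^w\big)z^\ell$ is then immediate, the $\ell$-th term being $z^\ell\sum_{|w|=\ell}F_wX^w$ and all terms with $\ell>m$ vanishing.
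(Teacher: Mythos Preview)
The paper does not actually prove Theorem~\ref{t:pow-series}; it merely states that the result ``can be found in \cite{KVV,KS,billvolume} among other places'' and then moves on. So there is no in-paper argument to compare against.

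That said, your sketch is a faithful outline of the standard proof one finds in those references: pass to the one-variable slice $z\mapsto f(zX)$, extract the homogeneous components $f_\ell$ via Cauchy's formula, observe that each $f_\ell$ is itself a free analytic function homogeneous of degree $\ell$, and then invoke the rigidity step (homogeneous free analytic $\Rightarrow$ homogeneous free polynomial) to produce the coefficients $F_w$. Your identification of the ``word tuples'' $N^{(w)}$ to read off individual coefficients, and your handling of the nilpotent extension via truncation plus the identity theorem on slices, are both correct and standard. The one place where you are (appropriately) hand-waving is exactly the step you flag: promoting a homogeneous free analytic function to a free polynomial. You correctly note this either comes from the nc Taylor--Taylor calculus of \cite{KVV,KS} or can be deduced from the intertwining property of Proposition~\ref{p:free-fun-alt}; either route works, and this is precisely why the paper is content to cite rather than reprove.
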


 Let \index{$S$}
\begin{equation*}
 S=\begin{pmatrix} 0 & 1 \\ 0 & 0 \end{pmatrix}.
\end{equation*}
 Thus, given $X=(X_1,X_2)\in M_n(\C)^2,$ the tuple 
\[
 S\otimes X =(S\otimes X_1, \, S\otimes X_2)
 = \left ( \begin{pmatrix} 0 & X_1 \\0 & 0 \end{pmatrix}, \,
 \begin{pmatrix} 0 & X_2 \\ 0 & 0  \end{pmatrix} \right)
 \in  M_{2n}(\C)^2 
\]
is nilpotent of order two. 
The evaluation of free map $f:\fP\to M(C)$ on 
 $S\otimes X$ takes a particularly simple form. 

\begin{proposition}
 If $f:\fP\to M(\C)$ is a free analytic function, then 
 $f$ extends uniquely to a function, still denoted $f,$
 defined on all tuples of the form $S\otimes X$
 for $X\in M(\C)^2.$ Moreover, 
  there exists $\ell_1,\ell_2\in \C$ such that
\[
 f(S\otimes X) = f(0) + S\otimes (\sum_{j=1}^2 \ell_j X_j) \in 
   M_2(\C)\otimes M_n(\C) =M_{2n}(\C).
\]
\end{proposition}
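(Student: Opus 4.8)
The plan is to obtain the statement as a direct consequence of the power-series representation in Theorem~\ref{t:pow-series}. The first step is to check that that theorem applies, i.e.\ that $\fP$ contains a ball $B(0,\delta)$ about the origin. Since $\fP=\cD_R$ with $\cL(X)=I+\Lambda_R(X)+\Lambda_R(X)^*$ and $\|R_j\|=\|C_j\|=1$, for $X\in B(0,\delta)$ one has $\|X_j\|<\delta$ and hence $\|\Lambda_R(X)\|<2\delta$, so $\cL(X)\succeq(1-4\delta)I\succ0$ once $\delta<\tfrac14$; thus $B(0,\tfrac14)\subseteq\fP$. Theorem~\ref{t:pow-series} then produces a formal power series $F(x)=\sum_{w\in\xx}F_w w$ with $f(Y)=\sum_w F_w Y^w$ for $Y\in B(0,\tfrac14)$, and extends $f$ analytically to every nilpotent tuple. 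The coefficients $F_w$ are scalars because $f[n]$ takes values in $M_n(\C)$ (and not in $M_n(\C)\otimes M_d(\C)$ for a fixed $d>1$).

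The second step uses the identity $(S\otimes X)^w=S^{|w|}\otimes X^w$, valid for every word $w$ by the mixed-product rule for Kronecker products, together with $S^2=0$. Hence $(S\otimes X)^w=0$ for $|w|\ge2$, which is exactly the statement that $S\otimes X$ is nilpotent of order two. Applying the last assertion of Theorem~\ref{t:pow-series} to this tuple and setting $z=1$ gives
\[
 f(S\otimes X)=\sum_{\ell=0}^{2}\ \sum_{|w|=\ell}F_w\,(S\otimes X)^w
 =F_\varnothing\,I_{2n}+F_{x_1}\,(S\otimes X_1)+F_{x_2}\,(S\otimes X_2),
\]
the length-two terms dropping out. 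Since $f$ respects direct sums, $f(0_m)=F_\varnothing I_m$ for all $m$, so $F_\varnothing I_{2n}$ is precisely the matrix denoted $f(0)$ in the statement; taking $\ell_j:=F_{x_j}$ yields $f(S\otimes X)=f(0)+S\otimes(\ell_1X_1+\ell_2X_2)$ as claimed.

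Finally, the extension is unique: the tuple $S\otimes X$ already lies in $\fP$ whenever $X$ is small enough (as $\fP$ is open and contains $0$), so on that part $f(S\otimes X)$ is prescribed by $f$ itself; and for arbitrary $X$ the map $z\mapsto f(zS\otimes X)$ is forced by analytic continuation along the ray through $0$, as furnished by Theorem~\ref{t:pow-series}. I do not expect any real obstacle in carrying this out: everything of substance is contained in Theorem~\ref{t:pow-series}, and the only points needing a line of verification are that $\fP$ contains a ball about $0$, that the power-series coefficients are scalar, and that second and higher powers of $S\otimes X$ vanish.
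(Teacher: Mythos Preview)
Your proposal is correct and is exactly the argument the paper has in mind: the proposition is stated immediately after Theorem~\ref{t:pow-series} without proof, as a direct corollary of that theorem applied to the order-two nilpotent tuple $S\otimes X$. You have simply filled in the routine verifications (that $\fP$ contains a ball about $0$, that $(S\otimes X)^w=S^{|w|}\otimes X^w$ vanishes for $|w|\ge 2$, and that the constant term is $f(0)$), so there is nothing to compare.
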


 We close this section with a proof of Proposition~\ref{p:free-fun-alt}. The ideas
 borrow freely from \cite{meric}.

\subsection{Proof of Proposition~\ref{p:free-fun-alt}}
\label{s:free-fun-alt}
 We are to show, if $\sS$ is a free open set and
 $f:\sS\to M(\C)$ is analytic and a free function in the sense that
 $f$ respects both direct sums and unitary similarity, then
 $f$ respects intertwinings. To this end, 
 suppose $X\in \sS[m]$ and $Y\in \sS[n]$ and $\Gamma$ is an
 $m\times n$ matrix such that $X\Gamma =\Gamma Y.$
 To show $f(X)\Gamma=\Gamma f(Y),$ 
 first observe we may replace $\Gamma$ with $t\Gamma$
 for any non-zero $t\in \C.$ Since $\sS$ is closed with respect
 to direct sums $Z=X\oplus Y\in \sS[n+m].$ Since $\sS[n+m]$ is open
 there is an $\epsilon>0$ such that if $T$ is an
 $(n+m)\times (n+m)$ matrix and $\|T-I\|<\epsilon,$ then
 $T$ is invertible and 
\[
 T^{-1} \, [X\oplus Y] \, T \in \sS[n+m].
\]

 For a matrix $M$ and $\rho\in \C$ with $|\rho|\, \|M\|<1,$ let
\[
 D_\rho(M) = (I-\rho^2 M^*M)^{\frac12}
\]
 and let, for $0\ne z\in \C,$
\[
 T_\rho(M)[z] = \begin{pmatrix}  D_\rho(M^*)  & \frac{\rho}{z} M\\
    -\rho \, z M^* & D_\rho(M) \end{pmatrix}.
\]
 For $|z|=1$ the matrix $T_\rho[M](z)$ is a version of the
 Julia matrix and is unitary.

 By choosing $\|\Gamma\|$ sufficiently small, we may assume,
 for all $0<\rho<1,$  and $\rho < |z| < \frac{1}{\rho},$
 that 
\[
  \|I- T_\rho(\Gamma)[z]\| <\epsilon.
\]
 Hence, 
\[
  T_\rho(\Gamma)[z]^{-1} \, [X\oplus Y] \, T_\rho(\Gamma)[z]
   \in \sS[n+m]. 
\]
 The function (for fixed $\rho$)
\[
 F(z) = f\left( T_\rho(\Gamma)[z]^{-1} \, [X\oplus Y] \,
  T_\rho(\Gamma)[z] \right )
 - T_\rho(\Gamma)[z]^{-1} \, f(X\oplus Y)  \, T_\rho(\Gamma)[z]
\]
 is defined and analytic on the annulus $\rho \le |z| \le \frac{1}{\rho}$
 and  vanishes for $|z|=1$ since $f$ respects unitary
 similarities. Thus $F$ vanishes identically. 
 Choosing
 $z=\rho$ and then letting $\rho$ tend to $0$ gives,
\[
 f(\begin{pmatrix} I & -\Gamma \\ 0& I \end{pmatrix} 
 \, \begin{pmatrix} X&0\\0& Y \end{pmatrix} \,
  \begin{pmatrix} I & \Gamma \\ 0& I \end{pmatrix} )\\
= 
 \begin{pmatrix} I & -\Gamma \\ 0& I \end{pmatrix} 
 \, f(\begin{pmatrix} X&0\\0& Y \end{pmatrix}) \,
  \begin{pmatrix} I & \Gamma \\ 0& I \end{pmatrix}.
\]
 Thus, using $f$ respects direct sums, 
\[
\begin{split}
 \begin{pmatrix} f(X) &0\\0 & f(Y) \end{pmatrix}
 &= f(\begin{pmatrix} X&0\\0& Y \end{pmatrix})
 = f(\begin{pmatrix} X&X\Gamma-\Gamma Y\\0& Y \end{pmatrix})
 \\ & = f(\begin{pmatrix} I & -\Gamma \\ 0& I \end{pmatrix} 
 \, \begin{pmatrix} X&0\\0& Y \end{pmatrix} \,
  \begin{pmatrix} I & \Gamma \\ 0& I \end{pmatrix} )
= 
 \begin{pmatrix} I & -\Gamma \\ 0& I \end{pmatrix} 
 \, f(\begin{pmatrix} X&0\\0& Y \end{pmatrix}) \,
  \begin{pmatrix} I & \Gamma \\ 0& I \end{pmatrix}
\\ &= 
 \begin{pmatrix} I & -\Gamma \\ 0& I \end{pmatrix} 
 \, \begin{pmatrix} f(X) &0\\0& f(Y) \end{pmatrix} \,
  \begin{pmatrix} I & \Gamma \\ 0& I \end{pmatrix}
 = \begin{pmatrix} f(X) & f(X)\Gamma -\Gamma f(Y)\\
    0&f(Y) \end{pmatrix}
\end{split}
\] 
 and the desired conclusion follows.

\section{A proof of Theorem~\ref{t:main}}
\label{s:proof} 
In this section, we present a proof of Theorem~\ref{t:main}. In Subsection~\ref{s:affine}, we construct a spectraballs   $\cB_B$ that is canonically associated with an automorphism $\varphi: \fP \to \fP$. In Subsection~\ref{s:tmd}, we collect consequences of the equality $\cB_B=\cB_R=\cB_E$ that are then used in Subsection~\ref{s:dichotomy} that $b = 0$. The proof concludes in Subsection~\ref{s:concludes}.

\subsection{An affine change of variables}
\label{s:affine}
 In this section a  spectraball $\cB_B$  canonically associated to an automorphism  $\varphi:\fP\to\fP$ is constructed.

 Suppose $\varphi:\fP\to\fP$ is bianalytic. The 
 mapping $\varphi$ has coordinate functions $\varphi_j$
 so that
\[
  \varphi(X) =(\varphi_1(X),\varphi_2(X))
\]
 for $X\in \fP.$

Express the  series expansions for $\varphi_j,$ 
  up to the first degree terms as
\begin{equation*}
\varphi_j(x)= b_j + \sum_k\ell_{j,k} x_k + \cdots.
\end{equation*}
 Since $\varphi$ is  bianalytic, 
 the matrix \index{$\fL$}
\begin{equation}
\label{d:fL}
 \fL=\begin{pmatrix} \ell_{1,1} & \ell_{1,2} 
       \\ \ell_{2,1} & \ell_{2,2}
   \end{pmatrix}
\end{equation}
 is invertible.

 Given a tuple $X\in M_n(\C)^2,$
\begin{equation*}
 \varphi_j(S\otimes X) = \begin{pmatrix} b_j & \sum_k \ell_{j,k} X_k\\
 0 & b_j \end{pmatrix}.
\end{equation*}

 Let $\tb_j=b_jC_j.$
 Let  \index{$B_0$}
\begin{equation*}
 B_0  =\begin{pmatrix} I&\tb_1&\tb_2&0\\ \tb_1^*
    &I & 0 & \tb_2\\ \tb_2^* & 0 & I  & \tb_1\\
 0 & \tb_2^* & \tb_1^* & I\end{pmatrix}.
\end{equation*}
 Since $\varphi(0)=b\in \fP,$  the matrix $B_0=\cL(\varphi(0))$
  is positive definite
 and hence invertible.

 Let 
\begin{equation}
\label{d:Y}
 Y_j  = \begin{pmatrix} 0 &   \ell_{1,j}C_1  & \ell_{2,j}C_2  & 0\\
 0&0&0 & \ell_{2,j}C_2  \\
 0&0&0 & \ell_{1,j}C_1   \\ 0&0&0&0\end{pmatrix}
 = \sum_k \ell_{k,j} R_k,
\end{equation}
 where $R$ is defined in equation~\eqref{e:R}.
Let $B=(B_1,B_2)$ denote the tuple defined by
\begin{equation*}
   B_j = B_0^{-\frac12} Y_j B_0^{-\frac12}.
\end{equation*}
Let $\Lambda_B(x)= B_1x_1 + B_2 x_2$ and let $\cB_B$ denote the
 resulting  spectraball,
\[
 \cB_B =\{X: \|\Lambda_B(X)\|<1\}.
\]

\begin{proposition} 
 \label{p:althfP}
 The equality $\hfP = \cB_B$ holds.
\end{proposition}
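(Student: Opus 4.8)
The plan is to show $\hfP = \cB_B$ by relating membership in each spectraball to the evaluation of the automorphism $\varphi$ on nilpotent tuples of the form $S\otimes X$, using the fact from Proposition~\ref{p:fP-alt} that membership in $\fP$ is governed by a strict contraction condition. First I would record that, by the formulas in Subsection~\ref{s:affine}, for $X\in M_n(\C)^2$ we have
\[
 \cL(\varphi(S\otimes X)) = B_0 \otimes I_n + \Lambda_Y(S\otimes X) + \Lambda_Y(S\otimes X)^*,
\]
where $\Lambda_Y(S\otimes X) = \sum_j Y_j \otimes (S\otimes X_j)$ with $Y_j$ as in equation~\eqref{d:Y}; here I am using $\varphi_j(S\otimes X) = \bigl(\begin{smallmatrix} b_j & \sum_k \ell_{j,k}X_k \\ 0 & b_j\end{smallmatrix}\bigr)$ together with the block structure of $R$ to identify $\cL$ evaluated at this tuple as $B_0$ plus a strictly upper/lower triangular (in the $S$-grading) perturbation. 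Conjugating by $B_0^{-1/2}\otimes I$ turns positivity of $\cL(\varphi(S\otimes X))$ into positivity of $I + \Lambda_B(S\otimes X) + \Lambda_B(S\otimes X)^*$, i.e.\ into the condition $S\otimes X \in \cD_B$.

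Next I would run the argument in both directions. If $X\in \hfP = \cB_R$, then $S\otimes X$ is a nilpotent tuple in (the closure-friendly extension of) $\fP$ — more precisely $\binom{0\ \ X}{0\ \ 0}\in \cD_R$ when $\|\Lambda_R(X)\| < 1$, by equation~\eqref{e:specballA} — so $\varphi(S\otimes X)$ is defined (using Theorem~\ref{t:pow-series}, $\varphi$ extends to all nilpotent tuples) and lies in $\fP$, hence $\cL(\varphi(S\otimes X))\succ 0$, hence $S\otimes X\in\cD_B$, hence (again by equation~\eqref{e:specballA} applied to $B$, since $S\otimes X$ is already of the off-diagonal nilpotent shape up to the internal block structure) $X\in\cB_B$. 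For the reverse inclusion I would use $\psi=\varphi^{-1}$, which is also an automorphism of $\fP$, and the chain is reversible: $X\in\cB_B \Rightarrow S\otimes X\in\cD_B \Rightarrow \cL(\varphi(S\otimes X))\succ 0 \Rightarrow \varphi(S\otimes X)\in\fP \Rightarrow$ (apply $\psi$, which preserves nilpotent evaluations and sends $\fP$ into $\fP$) $S\otimes X\in\fP \Rightarrow X\in\cB_R=\hfP$. The self-consistency of "$\varphi(S\otimes X)\in\fP$ and then $\psi$ brings us back" needs $\psi(\varphi(Z))=Z$ to persist on the nilpotent extension, which follows since both sides are free analytic on a neighbourhood of $0$ and agree there, so agree on all nilpotent tuples by Theorem~\ref{t:pow-series}.

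The main obstacle, and the step I would be most careful about, is the bookkeeping that identifies $\cL(\varphi(S\otimes X))$ with $B_0\otimes I + \Lambda_Y(S\otimes X)+\Lambda_Y(S\otimes X)^*$ exactly — that the cross terms $\ell_{j,k}$ and the constant terms $b_j$ assemble precisely into $Y_j=\sum_k\ell_{k,j}R_k$ and into $B_0$ with no leftover pieces. This is a direct but somewhat delicate $4\times 4$ block computation: one must check that evaluating $\cL$ (whose entries are $C_1\otimes(\cdot)$, $C_2\otimes(\cdot)$, and identities placed according to equation~\eqref{e:R}) at the $2\times 2$ upper-triangular matrices $\varphi_j(S\otimes X)$ produces exactly the claimed form, with the off-diagonal $S$-part carrying $\sum_k\ell_{j,k}X_k$ in the correct slots and the diagonal part carrying $b_j$, which upon forming $\cL$ gives $\tb_j=b_jC_j$ in the pattern of $B_0$. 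I would also note one technical point: to invoke equation~\eqref{e:specballA} for $\cB_B$ one should observe that the relevant nilpotent tuple really does sit in the off-diagonal corner relative to $B$'s block decomposition, or alternatively bypass this by directly computing $\|\Lambda_B(X)\|<1 \iff I - \Lambda_B(X)^*\Lambda_B(X)\succ 0 \iff \cL_B(S\otimes X)\succ 0$ via the Schur complement as in the proof of Proposition~\ref{p:fP-alt}. Everything else is formal, so once the block identity is pinned down the two inclusions fall out immediately.
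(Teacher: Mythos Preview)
Your approach is essentially the same as the paper's: the block identity you outline is precisely Lemma~\ref{l:althfP} (the paper conjugates by a permutation $\Sigma\otimes I_s$ to display it as $\bigl(\begin{smallmatrix} B_0 & \Lambda_Y(x)\\ \Lambda_Y(x)^* & B_0\end{smallmatrix}\bigr)$, which is your formula $B_0\otimes I_{2n}+\Lambda_Y(S\otimes X)+\Lambda_Y(S\otimes X)^*$ reorganized), and the paper then runs the same chain $X\in\hfP\iff S\otimes X\in\fP\iff \varphi(S\otimes X)\in\fP\iff \cL(\varphi(S\otimes X))\succ 0\iff \|\Lambda_B(X)\|<1$ as a single string of equivalences rather than splitting into two inclusions via $\psi$. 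Your extra care with $\psi$ on the nilpotent extension is not wrong, just unnecessary once one notes all the steps are genuine ``if and only if'' statements.
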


 Before proving Proposition~\ref{p:althfP} we record
 the following lemma.

 \begin{lemma}
 \label{l:althfP}
  There is a permutation matrix $\Sigma$ on 
 $\{1,2,\dots,8\}$ such that
\begin{equation}
 \label{e:althfP}
 (\Sigma^*\otimes I_s)  \cL(\varphi(S\otimes x)) (\Sigma\otimes I_s)
   = \begin{pmatrix}  B_0 & \Lambda_Y(x) \\ \Lambda_Y(x)^* & B_0
 \end{pmatrix}.
\end{equation}
\end{lemma}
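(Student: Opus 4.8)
The plan is to compute $\cL(\varphi(S\otimes x))$ directly from the definition of $\cL$ and the block-upper-triangular form of $\varphi_j(S\otimes X)$, and then to recognize the resulting $8s\times 8s$ hermitian matrix as a block matrix that, after a permutation of the $8$ index blocks (each of size $s$), takes the asserted form $\left(\begin{smallmatrix} B_0 & \Lambda_Y(x)\\ \Lambda_Y(x)^* & B_0\end{smallmatrix}\right)$. The key observation is that each entry $C_j\otimes \varphi_j(S\otimes X)$ appearing in $\cL(\varphi(S\otimes X))$ is, via the identity $C_j\otimes\left(\begin{smallmatrix} b_j & \sum_k\ell_{j,k}X_k\\0&b_j\end{smallmatrix}\right) = \left(\begin{smallmatrix} b_jC_j & C_j\otimes\sum_k\ell_{j,k}X_k\\ 0 & b_jC_j\end{smallmatrix}\right)$, the sum of a ``constant'' part $\tb_j = b_jC_j$ sitting on the diagonal of a $2\times 2$ block and a ``linear'' part. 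Grouping the constant parts across all four block rows of $\cL$ produces exactly the matrix $B_0$ (twice, since each of the two ``diagonal slots'' coming from $S$ contributes a copy), and grouping the linear parts produces $\Lambda_Y(x) = \sum_j Y_j x_j$ by the very definition of $Y_j$ in equation~\eqref{d:Y}.

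First I would fix notation: write $\cL(\varphi(S\otimes X))$ as a $4\times4$ array of $2s\times 2s$ blocks, where the $(p,q)$ block is $I_{2s}$ if $p=q$, is $C_j\otimes\varphi_j(S\otimes X)$ for the appropriate $j$ in the positions dictated by the pattern of $\cL$, and is $0$ otherwise. Then I would reindex: each $2s\times 2s$ block carries an internal $2\times 2$ structure coming from the $S$-tensor factor, so the whole matrix is naturally indexed by pairs (one of four ``outer'' slots, one of two ``inner'' slots), i.e.\ by $\{1,\dots,8\}$ after choosing an ordering. Choosing the ordering that lists all four outer slots with inner index $1$ first, then all four with inner index $2$, groups the matrix into four $4s\times 4s$ blocks. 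The inner-$1$/inner-$1$ block collects only the constant parts $\tb_j$ (together with the identities), giving $B_0\otimes(\text{something})$; more precisely it gives exactly $B_0$. The inner-$2$/inner-$2$ block likewise gives $B_0$. The inner-$1$/inner-$2$ block collects the linear parts $C_j\otimes\sum_k\ell_{j,k}X_k$ in the off-diagonal slots of $\cL$, which, reading off the pattern of $\cL$ and using $Y_j=\sum_k\ell_{k,j}R_k$, is precisely $\Lambda_Y(X)=\sum_j Y_j\otimes X_j$; the inner-$2$/inner-$1$ block is its adjoint. The permutation $\Sigma$ is the one implementing this reindexing, tensored with $I_s$ to act on the full $8s\times 8s$ space.

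The only real content is bookkeeping: one must check that the twelve nonzero off-diagonal blocks of $\cL$ — the entries $\pm C_1\otimes X_1$ and $\pm C_2\otimes X_2$ (six above the diagonal, with adjoints below) in the specific positions shown in the definition of $\cL$ — land, after the reindexing, with their constant parts exactly filling the off-diagonal pattern of $B_0$ and their linear parts exactly filling the pattern of the matrices $R_k$ (hence assembling to $Y_j$). This is a finite check that I would carry out by writing the position pattern of $\cL$ once and applying it in parallel to the three pieces (identity, constant, linear). I expect the main obstacle to be purely organizational: keeping the two levels of block structure straight and verifying that the ``inner-diagonal'' slots genuinely decouple from the ``inner-off-diagonal'' slots — which they do precisely because $S$ is nilpotent of order two, so $\varphi_j(S\otimes X)$ has no inner-$(2,1)$ entry and its inner-$(1,1)$ and inner-$(2,2)$ entries coincide (both equal $b_j$). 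With that decoupling in hand, the identification of the four sub-blocks with $B_0$, $\Lambda_Y(x)$, $\Lambda_Y(x)^*$, $B_0$ is immediate from the definitions of $B_0$ and $Y_j$, and the lemma follows.
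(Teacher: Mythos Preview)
Your proposal is correct and follows essentially the same approach as the paper: both compute $\cL(\varphi(S\otimes x))$ explicitly using the $2\times 2$ upper-triangular form of $\varphi_j(S\otimes x)$, and then apply the permutation that groups the four ``inner index $1$'' blocks before the four ``inner index $2$'' blocks---which is precisely the permutation $(1,3,5,7,2,4,6,8)$ the paper names. Your conceptual explanation of \emph{why} this permutation works (the nilpotence of $S$ forces the inner-$(2,1)$ entries to vanish and the two inner-diagonal entries to coincide, so the constant and linear parts decouple cleanly into $B_0$ and $\Lambda_Y(x)$) is a nice gloss on what the paper leaves as a bare matrix identity.
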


\begin{proof}
 Let $\ty_j=(\sum_{k=1}^2 \ell_{j,k}x_k)\, C_j .$ 
 Identifying  $0$ with the 
 $0$ matrix of size $s\times s,$
\[
 \cL(\varphi(S\otimes x)) 
  = \begin{pmatrix} I&0&\tb_1&\ty_1&\tb_2&\ty_2&0&0\\
       0&I&0&\tb_1&0&\tb_2&0&0 \\
       \tb_1^* &0&I&0&0&0&\tb_2&\ty_2\\
      \ty_1^*&\tb_1^*&0&I&0&0&0&\tb_2\\
      \tb_2^*&0&0&0&I&0&\tb_1&\ty_1\\
      \ty_2^*&\tb_2^*&0&0&0&I&0&\tb_1\\
      0&0&\tb_2^*&0&\tb_1^*&0&I&0\\
     0&0&\ty_2^*&\tb_2^*&\ty_1^*&\tb_1^*&0&I  \end{pmatrix}.
\]
 Thus equation~\eqref{e:althfP} holds with $\Sigma$
 equal the tensor product
 of  the matrix associated to  the permutation
 $(1,3,5,7,2,4,6,8)$ with $I_s,$
 the $s\times s$ identity,  after noting that
\[
 \begin{pmatrix} 0&\ty_1&\ty_2&0\\0&0&0&\ty_2\\0&0&0&\ty_1\\
   0&0&0&0 \end{pmatrix} =  \Lambda_Y(x). \qedhere
\]
\end{proof}

\begin{proof}[Proof of Proposition~\ref{p:althfP}]
 Using the assumption that  $\varphi$ is an automorphism of $\fP,$
 Equation~\eqref{e:specballA}  and Lemma~\ref{l:althfP},
  a tuple $X$ is in $\hfP[n]$ if and only if $S\otimes X\in \fP[2n]$
 if and only if $\varphi(S\otimes X)\in \fP$ if and only
  if $\cL(\varphi(S\otimes X))\succ 0$ if and only if
\[
  \begin{pmatrix} B_0\otimes I_n & \Lambda_Y(X) \\ 
    \Lambda_Y(X)^*&B_0\otimes I_n \end{pmatrix} \succ 0
\]
 if and only if
\[
{\scriptsize
 0\prec
  \begin{pmatrix} B_0^{-\frac12}\otimes I_n &0 \\0& B_0^{-\frac12}\otimes I_n\end{pmatrix}
 \, \begin{pmatrix} B_0\otimes I_n & \Lambda_Y(X) \\ 
    \Lambda_Y(X)^*&B_0\otimes I_n \end{pmatrix} \,
 \begin{pmatrix} B_0^{-\frac12}\otimes I_n &0 \\0& B_0^{-\frac12}\otimes I_n\end{pmatrix}
 = \begin{pmatrix} I & \Lambda_B(X)\\ \Lambda_B(X)^*&I \end{pmatrix}
}
\]
 if and only if $X\in \cB_B[n].$
\end{proof}

\subsection{Analyzing the equality $\hfP=\cB_B$}
\label{s:tmd}
 In this section we deduce consequences of the equality the two representation $\cB_E$ and $\cB_B$ of $\cB_R$  appearing in Propositions~\ref{p:fPp-is-a-ball}  and \ref{p:althfP} respectively
 using the theory of spectraballs.

 Since, by Proposition~\ref{p:E-is-ball-min},
  $E$ is ball-minimal and $\cB_E=\cB_B,$ Lemma~\ref{l:bmin}
 implies there exist unitary matrices $U,V$ such that
\[
 U \begin{pmatrix} E&0\\0&J\end{pmatrix}V^* =B,
\]
 where $J=(J_1,J_2)\in M_s(\CC)^2$
 and $\cB_E \subseteq \cB_J.$ 
 Letting $\{e_1,\dots,e_4\}$
 denote the standard basis for $\CC^4$
 and  $H_j=\{e_j\otimes h: h\in \CC^s\},$
\[
\ker(B):= \cap_{j=1}^2 \ker(B_j) =B_0^{\frac12} H_1,
\]
 since $\cap_{j=1}^2 \ker(Y_j)=H_1.$  Thus
\[
 \begin{pmatrix} E_j &0\\0&J_j\end{pmatrix}V^* B_0^{\frac12} H_1 =0.
\]
 Since $E$ has no kernel, it follows that $J=0$ and
 $V^*B_0^{\frac12} H_1 = H_4.$   In particular,
\[
  U^* B_j V =\begin{pmatrix} E_j&0 \\ 0 & 0 \end{pmatrix}
\]
where the lower left $0$ matrix has size $s\times s.$

 Summarizing, there exist unitary matrices $U,V$ 
 such that
\[
 B_j = U \begin{pmatrix} E_j & 0 \\0&0\end{pmatrix} V^*.
\]
 Set $P=B_0^{\frac12}U$ and $Q^*=V^* B_0^{\frac12}.$  
 Thus $PP^*=B_0=QQ^*$ and
\[
 Y_j= P \begin{pmatrix} E_j&0\\0&0\end{pmatrix} Q^*.
\]

 Express $P$ and $Q$ in terms of their block columns as
\begin{equation*}
\begin{split}
 P & =\begin{pmatrix} p_1 & p_2 & p_3 & p_4\end{pmatrix}
     = \begin{pmatrix} P_{j,k} \end{pmatrix}_{j,k=1}^4 \\
 Q&=\begin{pmatrix} q_1 & q_2 & q_3 & q_4\end{pmatrix}
  =\begin{pmatrix} Q_{j,k}\end{pmatrix}_{j,k=1}^4,
\end{split},
\end{equation*}
 where $P_{j,k}$ and $Q_{j,k}$ are $s\times s$ matrices.
 With these notations, 
\begin{equation}
 \label{e:YvPQ-mat}
\begin{split}
  Y_1 = & P\, \begin{pmatrix} E_1&0\\0&0 \end{pmatrix} \,  Q^* 
   = P[(e_1e_1^* +e_3e_3^*)\otimes C_1]Q^*
    = p_1 C_1 q_1^* + p_3 C_1 q_3^* \\
 Y_2 = & P\, \begin{pmatrix} E_2&0\\0&0 \end{pmatrix} \,  Q^* 
   =P[(e_1e_2^* +e_2e_3^*)\otimes C_2]Q^*
   = p_1C_2q_2^* + p_2C_2 q_3^*,
\end{split}
\end{equation}
 where $\{e_1,e_2,e_3,e_4\}$ is the standard basis for 
 $\C^4.$  Let $T=Q_{4,3}^{-*}$ and $S=P_{1,1}^{-*}.$

\begin{lemma}
 \label{l:PnQ}
 With notations above, $Q_{1,4}$ and $P_{4,4}$ are unitary, $Q_{4,3}$ and $P_{1,1}$ are invertible  and
\begin{equation*}
 \begin{split}
  P&= \begin{pmatrix} P_{1,1} & P_{1,2}&P_{1,3}&0\\
    0& \ell_{2,2} C_2 T C_2^{-1} & \ell_{2,1} C_2 T C_1^{-1} & b_2 C_2 P_{4,4}\\
    0& \ell_{1,2} C_1 T C_2^{-1}  & \ell_{1,1} C_1 T C_1^{-1} & b_1 C_1 P_{4,4}\\
    0&0&0&P_{4,4} \end{pmatrix} \\
  Q& =\begin{pmatrix} 0 & 0& 0& Q_{1,4} \\
    \ell_{1,1}^* C_1^* S C_1^{-*} & \ell_{1,2}^* C_1^*SC_2^{-*}  & 0& b_1^*C_1^* Q_{1,4}  \\
   \ell_{2,1}^* C_2^* S C_1^{-*} & \ell_{2,2}^* C_2^* SC_2^{-*} & 0 & b_2^*C_2^*  Q_{1,4}\\
   Q_{4,1} & Q_{4,2} & Q_{4,3} & 0 \end{pmatrix}.
 \end{split}
\end{equation*}
\end{lemma}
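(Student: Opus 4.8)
The plan is to recover every block of $P$ and $Q$ by playing the explicit formula \eqref{d:Y} for $Y_j$ against the factored formula \eqref{e:YvPQ-mat}, while exploiting the identities $PP^*=B_0=QQ^*$. Write $p_1,\dots,p_4$ and $q_1,\dots,q_4$ for the block columns of $P$ and of $Q$. The crux is an opening rank count. Because $\fL$, $C_1$ and $C_2$ are invertible, inspection of \eqref{d:Y} shows $\dim\range Y_j=2s$ for $j=1,2$ (the blocks $(2,4)$ and $(3,4)$ of $Y_j$ both involve the single fourth argument block, so the range is $2s$- rather than $3s$-dimensional), that $\range Y_1+\range Y_2=H_1\oplus H_2\oplus H_3$ (where nonvanishing of the rows of $\fL$ and of $\det\fL$ is used), and that $H_1\subseteq\range Y_j$ for both $j$. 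On the other hand \eqref{e:YvPQ-mat} shows $\range Y_1$ lies in the span of the columns of $p_1$ and $p_3$, and $\range Y_2$ in the span of the columns of $p_1$ and $p_2$, so comparing dimensions forces equality in each. Hence $p_1,p_2,p_3$ have full column rank $s$, all lie in $H_1\oplus H_2\oplus H_3$ (so $P_{4,1}=P_{4,2}=P_{4,3}=0$), and the column span of $p_1$ equals $\range Y_1\cap\range Y_2=H_1$, so $P_{2,1}=P_{3,1}=0$ and $P_{1,1}$ is invertible. Running the identical argument on the adjoints, using $Y_1^*=q_1C_1^*p_1^*+q_3C_1^*p_3^*$, $Y_2^*=q_2C_2^*p_1^*+q_3C_2^*p_2^*$, and $\range Y_j^*=(\ker Y_j)^\perp$ together with the equality $\ker Y_1\cap\ker Y_2=H_1$ recorded earlier, gives $Q_{1,1}=Q_{1,2}=Q_{1,3}=0$ (consistent with $Q^*H_1=H_4$) and the column span of $q_3$ equal to $\range Y_1^*\cap\range Y_2^*=H_4$, so $Q_{2,3}=Q_{3,3}=0$ and $Q_{4,3}$ is invertible. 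Finally, with $P_{4,k}=0$ for $k\le3$ the $(4,4)$ block of $PP^*=B_0$ reads $P_{4,4}P_{4,4}^*=I$, and with $Q_{1,k}=0$ for $k\le3$ the $(1,1)$ block of $QQ^*=B_0$ reads $Q_{1,4}Q_{1,4}^*=I$; so $P_{4,4}$ and $Q_{1,4}$ are unitary, and $S=P_{1,1}^{-*}$, $T=Q_{4,3}^{-*}$ are well defined.

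With this vanishing pattern in hand — only the top block of $p_1$ and the bottom block of $q_3$ survive, and the fourth blocks of $p_2,p_3$ and the first blocks of $q_1,q_2$ vanish — the right side of \eqref{e:YvPQ-mat} is a completely explicit block matrix, and I would equate its blocks $(1,2),(1,3),(2,4),(3,4)$ with the entries $\ell_{1,j}C_1$, $\ell_{2,j}C_2$, $\ell_{2,j}C_2$, $\ell_{1,j}C_1$ of $Y_j$ read off from \eqref{d:Y}. Solving these single-block equations, using invertibility of $C_1,C_2,P_{1,1},Q_{4,3}$, produces $Q_{2,1},Q_{3,1}$ from $Y_1$ and $Q_{2,2},Q_{3,2}$ from $Y_2$, together with $P_{2,3},P_{3,3}$ from $Y_1$ and $P_{2,2},P_{3,2}$ from $Y_2$, in precisely the stated forms involving $S$ and $T$; for instance $P_{1,1}C_1Q_{2,1}^*=\ell_{1,1}C_1$ gives $Q_{2,1}=\ell_{1,1}^*C_1^*SC_1^{-*}$, while $P_{2,2}C_2Q_{4,3}^*=\ell_{2,2}C_2$ gives $P_{2,2}=\ell_{2,2}C_2TC_2^{-1}$. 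The eight remaining entries come from the last block column of $PP^*=B_0$ and the first block column of $QQ^*=B_0$: since $(B_0)_{1,4},(B_0)_{2,4},(B_0)_{3,4}$ equal $0,\tb_2,\tb_1$ and $P_{4,k}=0$ for $k\le3$, the $(i,4)$ blocks of $PP^*$ give $P_{1,4}=0$, $P_{2,4}=b_2C_2P_{4,4}$, $P_{3,4}=b_1C_1P_{4,4}$ (using $P_{4,4}$ unitary, so $P_{4,4}^{-*}=P_{4,4}$); dually, since $(B_0)_{2,1},(B_0)_{3,1},(B_0)_{4,1}$ equal $\tb_1^*,\tb_2^*,0$, the $(i,1)$ blocks of $QQ^*$ give $Q_{2,4}=b_1^*C_1^*Q_{1,4}$, $Q_{3,4}=b_2^*C_2^*Q_{1,4}$, $Q_{4,4}=0$.

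I expect the only genuine obstacle to be the opening rank count: the identifications of the column spans of $p_1$ and $q_3$ with $H_1$ and $H_4$ rely on the exact value $\dim\range Y_j=2s$ and on $\range Y_1\cap\range Y_2$ being as small as dimension permits, both of which use that no row or column of $\fL$ vanishes and that the relevant $2\times2$ minor of $\fL$ is nonzero, together with invertibility of $C_1$ and $C_2$. Everything downstream is mechanical block bookkeeping against \eqref{e:YvPQ-mat} and $PP^*=QQ^*=B_0$.
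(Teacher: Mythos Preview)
Your proof is correct, and the downstream bookkeeping (matching blocks of $Y_j$ against \eqref{e:YvPQ-mat}, then reading off the last column of $PP^*$ and the first column of $QQ^*$) is essentially identical to the paper's. Where you diverge is in the opening move that pins down the vanishing blocks $P_{2,1}=P_{3,1}=P_{4,1}=P_{4,2}=P_{4,3}=0$ and their $Q$-analogues. The paper constructs explicit left annihilators $W_j=\begin{pmatrix}0&\ell_{1,j}C_2^{-1}&-\ell_{2,j}C_1^{-1}&0\end{pmatrix}$ satisfying $W_jY_j=0$, and then uses a dual basis $r_k$ for the block columns of $Q$ (so $q_j^*r_k=\delta_{j,k}$) to peel the factored form \eqref{e:YvPQ-mat} apart term by term, obtaining $W_jp_1=0$ for $j=1,2$ and hence $P_{2,1}=P_{3,1}=0$ via invertibility of $\fL$; the vanishing of the fourth block row of $P$ comes separately from $[e_4^*\otimes I_s]Y_j=0$. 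Your rank/range argument is more geometric and arguably cleaner: once you know $\dim\range Y_j=2s$, $\range Y_1+\range Y_2=H_1\oplus H_2\oplus H_3$, and $\range Y_1\cap\range Y_2=H_1$, the identifications $\operatorname{span}(p_1,p_2,p_3)=H_1\oplus H_2\oplus H_3$ and $\operatorname{span}(p_1)=H_1$ drop out of a single dimension count, without ever writing down an annihilator or a dual basis. The paper's approach is more hands-on and easier to check line by line; yours makes the invertibility of $\fL$, $C_1$, $C_2$ do all their work in one stroke. Both routes land on the same set of block identities \eqref{e:PnQ} and proceed identically from there.
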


\begin{proof}
 With
\[
 W_j =\begin{pmatrix} 0&\ell_{1,j}C_2^{-1} &-\ell_{2,j}C_1^{-1} &0 \end{pmatrix}, 
\]
 observe 
\[
 0= W_1 Y_1 = W_1 p_1 C_1q_1^* +W_1p_2 C_1 q_3^*.
\]
Since $Q$ is invertible, there exist $r_1,\dots,r_4\in M_{4s,s}(\C)$  such that
\[
 q_j^*r_k = \delta_{j,k}.
\]
 Hence, from the first identity in equation~\eqref{e:YvPQ-mat}
 and equation~\eqref{d:Y},
\[
 0= W_1p_1 C_1 q_1^*r_1 + W_1 p_2 C_1 q_3^*r_1 
  = W_1p_1C_1 = 
  [\ell_{1,1} C_2^{-1} P_{2,1} -\ell_{2,1}C_1^{-1}P_{3,1}] C_1 
\]
 and therefore $\ell_{1,1} C_2^{-1}P_{2,1}-\ell_{2,1}C_1^{-1} P_{3,1}=0.$
  A similar argument using the second identity in 
 equation~\eqref{e:YvPQ-mat} shows 
\[
 0 = [\ell_{1,2}C_2^{-1}P_{2,1} -\ell_{2,2} C_1^{-1}P_{3,1}] C_2
\]
 and thus
 $\ell_{1,2} C_2^{-1} P_{2,1} -\ell_{2,2}C_1^{-1}P_{3,1}=0.$
 Since $\fL$ from equation~\eqref{d:fL} is invertible, we conclude  $C_2^{-1}P_{2,1}=C_1^{-1}P_{3,1}=0$
 and therefore $P_{2,1}=P_{3,1}=0.$

  Since $[e_4^*\otimes I_s]\, Y_j=0,$ it also follows that 
  $P_{4,1}=P_{4,2}=P_{4,3}=0.$ In particular
  $P_{1,1}$ is invertible. Examining the
  (block) $(4,4)$ entry of $PP^*=B_0$ shows $P_{4,4}P_{4,4}^*=I_s.$
  From the last column  of $PP^*=B_0$ it now
  follows that $P_{1,4}=0,$ and $P_{2,4}=b_2C_2 P_{4,4}$ 
 as well as $P_{3,4}=b_1C_1 P_{4,4}.$ 
  At this point we have identified, as indicated, the
 first and last row and column of $P.$ Similar reasoning
  applies to the third and forth columns and first and fourth
 rows of $Q.$

Comparing the descriptions of $Y_j$ 
 from equations~\eqref{e:YvPQ-mat}.
and \eqref{d:Y} gives,
\begin{equation}
 \label{e:PnQ}
  \begin{split}
    P_{1,1}C_1Q_{2,1}^* &= \ell_{1,1}C_1\\
    P_{1,1} C_1 Q_{3,1}^*&= \ell_{2,1}C_2\\
     P_{1,1}C_2 Q_{2,2}^*&=\ell_{1,2} C_1 \\
    P_{1,1} C_2 Q_{3,2}^*&=\ell_{2,2}C_2 
\end{split}
\quad \quad \quad
\begin{split}
    P_{2,3}C_1Q_{4,3}^* & = \ell_{2,1}C_2 \\    
    P_{3,3} C_1 Q_{4,3}^* &=\ell_{1,1}C_1 \\   
    P_{2,2}C_2 Q_{4,3}^* &= \ell_{2,2}C_2 \\
    P_{3,2}C_1 Q_{4,3}^* & = \ell_{1,2}C_2.
  \end{split}
\end{equation}
 Comparing the identities above with the definitions
 of $T$ and $S$ completes the proof.
\end{proof}

Let
\begin{equation*}
 \Sigma = \begin{pmatrix} P_{2,2} & P_{2,3}\\P_{3,2} & P_{3,3}
  \end{pmatrix}, \ \ 
 \Lambda=\begin{pmatrix} b_2 C_2 P_{4,4} \\ b_1 C_1 P_{4,4} 
   \end{pmatrix}.
\end{equation*}
 From the middle $2\times 2$ block of $PP^*=B_0,$ 
\begin{equation}
 \label{e:Sig-Lam}
 \Sigma\Sigma^* + \Lambda\Lambda^* = I
  =\begin{pmatrix} I &0\\0&I\end{pmatrix}.
\end{equation}
 Let
\[
 G=\begin{pmatrix} \ell_{1,1} & -\ell_{2,1}
    \\ -\ell_{1,2} & \ell_{2,2} \end{pmatrix} 
    \, \begin{pmatrix} C_2^{-1} & 0\\ 0&C_1^{-1}
     \end{pmatrix},
\]
 and observe
\[
 G\Sigma 
 = \det\fL \, \begin{pmatrix} TC_2^{-1} &0\\ 0 & TC_1^{-1}
    \end{pmatrix}, %
 \ \ \ 
 G\Lambda = \begin{pmatrix}\ell_{1,1}b_2 -\ell_{2,1}b_1 \\
             -\ell_{1,2}b_2 +\ell_{2,2}b_1 \end{pmatrix}\, P_{4,4}.
\]
 Thus, applying $G$ on the left and $G^*$ on the right of
 equation~\eqref{e:Sig-Lam} and comparing the $(1,2)$
 (block) entries gives,
\begin{equation}
\label{e:pres=1}
-[\ell_{1,1}\ell_{1,2}^* C_2^{-1}C_2^{-*}
   + \ell_{2,1} \ell_{2,2}^* C_1^{-1}C_1^{-*}]
 \, = \, (\ell_{1,1}b_2 -\ell_{2,1}b_1)
    (-\ell_{1,2}b_2 +\ell_{2,2}b_1)^*.
\end{equation}

\subsection{A dichotomy}
\label{s:dichotomy}
 With equation~\eqref{e:pres=1} in place, 
 we are now in position to state and prove
 the following  lemma. Recall $s$ is the size of $C.$

\begin{lemma}
 \label{l:s=1}
 If $s>1,$ then $b_1b_2^*=0$ and 
 \begin{enumerate}[(i)]
  \item $\ell_{1,2}=0=\ell_{2,1}$  or 
  \item $\ell_{1,1}=0=\ell_{2,2}.$
 \end{enumerate}
\end{lemma}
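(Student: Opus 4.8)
The plan is to extract the dichotomy directly from the key relation \eqref{e:pres=1} together with the fact, from \eqref{e:Sig-Lam}, that $\Sigma\Sigma^*+\Lambda\Lambda^*=I$; here $\Lambda\Lambda^* = \operatorname{diag}(|b_2|^2, |b_1|^2)\,(C_2C_2^*, C_1C_1^*)$ after conjugating by $P_{4,4}$ (which is unitary). The left-hand side of \eqref{e:pres=1} is a fixed $s\times s$ matrix built from the $\ell$'s and the $C_j^{-1}$'s; the right-hand side is the rank-at-most-one matrix $(\ell_{1,1}b_2-\ell_{2,1}b_1)(-\ell_{1,2}b_2+\ell_{2,2}b_1)^*\otimes I_s$ — more precisely, it is a scalar multiple $c\, I_s$ with $c = (\ell_{1,1}b_2-\ell_{2,1}b_1)\overline{(-\ell_{1,2}b_2+\ell_{2,2}b_1)}$, since $b_j\in\C$. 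So \eqref{e:pres=1} forces
\[
 \ell_{1,1}\bar\ell_{1,2}\, C_2^{-1}C_2^{-*} \;+\; \ell_{2,1}\bar\ell_{2,2}\, C_1^{-1}C_1^{-*} \;=\; -c\, I_s .
\]
First I would use hypothesis (ii) of Theorem~\ref{t:main}: the C-star algebra generated by $\{C_1C_1^*, C_2C_2^*\}$ is all of $M_s(\C)$, hence so is the one generated by their inverses $\{C_1^{-1}C_1^{-*}, C_2^{-1}C_2^{-*}\}$. When $s>1$, a nontrivial $\C$-linear combination $a_1 C_2^{-1}C_2^{-*} + a_2 C_1^{-1}C_1^{-*}$ can equal a scalar multiple of $I_s$ only if $a_1=a_2=0$: otherwise one of $C_1^{-1}C_1^{-*}$, $C_2^{-1}C_2^{-*}$ would be an affine function of the other, so they would generate an abelian algebra, contradicting that they generate all of $M_s(\C)$ with $s>1$. (This is essentially the argument in Lemma~\ref{l:CstarC}.) Therefore $\ell_{1,1}\bar\ell_{1,2}=0$ and $\ell_{2,1}\bar\ell_{2,2}=0$, and also $c=0$.

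From $\ell_{1,1}\ell_{1,2}=0$ and $\ell_{2,1}\ell_{2,2}=0$ (complex conjugates may be dropped), together with the invertibility of $\fL$ in \eqref{d:fL}, I would run through the cases. If $\ell_{1,1}=0$ then $\ell_{2,1}\neq0$ (else the first column of $\fL$ vanishes), forcing $\ell_{2,2}=0$, which is alternative (ii). If instead $\ell_{1,2}=0$, then $\ell_{2,2}\neq0$ (else the second column vanishes), forcing $\ell_{2,1}=0$, which is alternative (i). These exhaust the possibilities (if $\ell_{1,1}\neq0$ and $\ell_{1,2}\neq 0$ then $\ell_{1,1}\ell_{1,2}\neq 0$, a contradiction), so exactly one of (i), (ii) holds.

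It remains to get $b_1b_2^*=0$, equivalently $b_1\bar b_2 = 0$ since the $b_j$ are scalars. Here I would use $c=0$ together with whichever alternative holds. In case (i), $\ell_{1,2}=\ell_{2,1}=0$, so $c = \ell_{1,1}b_2\cdot \overline{\ell_{2,2}b_1} = \ell_{1,1}\bar\ell_{2,2}\, b_2\bar b_1$; since $\fL$ is invertible and now diagonal, $\ell_{1,1}\ell_{2,2}\neq0$, hence $b_2\bar b_1=0$, i.e. $b_1b_2^*=0$. Case (ii) is symmetric: $\ell_{1,1}=\ell_{2,2}=0$, $\fL$ is anti-diagonal with $\ell_{1,2}\ell_{2,1}\neq0$, and $c = (-\ell_{2,1}b_1)\overline{(-\ell_{1,2}b_2)} = \ell_{2,1}\bar\ell_{1,2}\, b_1\bar b_2 = 0$ gives $b_1\bar b_2=0$ again. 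The main obstacle, and the only place genuine structure is used, is the linear-independence step for $\{C_1^{-1}C_1^{-*}, C_2^{-1}C_2^{-*}\}$ modulo scalars when $s>1$; everything else is bookkeeping with the $2\times 2$ matrix $\fL$. One should double-check the scalar/identity bookkeeping in \eqref{e:pres=1} — that the right side really is a multiple of $I_s$ and not merely a rank-one block — but this is immediate because the $b_j$ and $\ell_{j,k}$ are complex numbers, not matrices.
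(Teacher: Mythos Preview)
Your proof is correct and follows essentially the same route as the paper's: both extract from equation~\eqref{e:pres=1} that a nontrivial affine relation among $I_s$, $C_1^{-1}C_1^{-*}$, and $C_2^{-1}C_2^{-*}$ would force $C_1^*C_1$ and $C_2^*C_2$ to commute, contradicting hypothesis~(ii) of Theorem~\ref{t:main} when $s>1$, and then read off the dichotomy and $b_1b_2^*=0$ from invertibility of $\fL$. One small correction: $C_j^{-1}C_j^{-*} = (C_j^*C_j)^{-1}$, not $(C_jC_j^*)^{-1}$, so you should invoke the hypothesis on $\{C_1^*C_1, C_2^*C_2\}$ rather than on $\{C_1C_1^*, C_2C_2^*\}$---both are assumed, so the argument is unaffected.
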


\begin{proof}
 If the right hand side of equation~\eqref{e:pres=1} is
 not $0,$ then either $C_1^{-1}C_{1}^{-*}$ and $C_2^{-1}C_2^{-*}$
 commute or $C_j^{-1}C_j^{-*}$ is a multiple of
 the identity for either $j=1$ or $j=2.$  
 In either case  $C_1^*C_1$ and $C_2^*C_2$ commute
 and thus, as $\{C_1^*C_1,C_2^*C_2\}$ generates
 $M_s(\CC)$ as a C-star  algebra, $s=1.$
 Thus, if  $s>1,$ then  the right hand side of
  equation~\eqref{e:pres=1}  is $0.$

  If  $\ell_{1,1}\ell_{1,2}^*\ne0$ or $\ell_{2,1} \ell_{2,2}^*\ne 0,$
 then they are both not zero and again, $C_1^*C_1$ and $C_2^*C_2$
 commute and $s=1.$ Thus, if $s>1,$ then
  $\ell_{1,1}\ell_{1,2}^* = 0 =\ell_{2,1} \ell_{2,2}^*.$
 Since $\fL$ is invertible, there are two cases,
 either $\ell_{2,1}=0=\ell_{1,2}$ (and $\ell_{1,1}\ne 0\ne \ell_{2,2}$)
 or $\ell_{1,1} =  0 = \ell_{2,2}$ (and 
 $\ell_{2,1} \ne 0 \ne \ell_{1,2}$). In either case $b_1b_2^*=0.$
\end{proof}

\begin{lemma}
\label{l:s>1:b=0}
 If $s>1,$ then $b=0.$
\end{lemma}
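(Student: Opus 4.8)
The plan is to leverage the dichotomy established in Lemma~\ref{l:s=1}, which reduces the problem to two symmetric cases for the linear part $\fL$, and then to extract the conclusion $b=0$ from the ball equality $\hfP=\cB_B$ together with the detailed structure of $P$ and $Q$ recorded in Lemma~\ref{l:PnQ}. First, I would dispose of one of the two cases by symmetry: the trivial automorphism swapping the two coordinates conjugates one case into the other, so it suffices to treat, say, the case $\ell_{1,2}=0=\ell_{2,1}$, in which $\fL$ is diagonal with $\ell_{1,1},\ell_{2,2}\neq 0$, and in which Lemma~\ref{l:s=1} already gives $b_1b_2^*=0$.

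Next, I would exploit that $b_1 b_2^* = 0$ together with the block structure of $P$ from Lemma~\ref{l:PnQ}. In the diagonal case, the matrix $\Sigma=\begin{pmatrix} P_{2,2}&P_{2,3}\\ P_{3,2}&P_{3,3}\end{pmatrix}$ becomes block diagonal (its off-diagonal entries $P_{2,3}=\ell_{2,1}C_2TC_1^{-1}$ and $P_{3,2}=\ell_{1,2}C_1TC_2^{-1}$ vanish), and $\Lambda=\begin{pmatrix} b_2 C_2 P_{4,4}\\ b_1 C_1 P_{4,4}\end{pmatrix}$. The identity \eqref{e:Sig-Lam}, namely $\Sigma\Sigma^*+\Lambda\Lambda^*=I$, then splits into two scalar-block identities: $|\ell_{2,2}|^2 C_2 TT^* C_2^* + |b_2|^2 C_2 P_{4,4}P_{4,4}^* C_2^* = I$ and $|\ell_{1,1}|^2 C_1 TT^* C_1^* + |b_1|^2 C_1 P_{4,4}P_{4,4}^* C_1^* = I$ (using $P_{4,4}$ unitary, so $P_{4,4}P_{4,4}^*=I_s$), plus vanishing of the cross terms. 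Since $b_1 b_2^*=0$, at least one of $b_1,b_2$ is zero, say $b_2=0$; then the first identity forces $TT^* = |\ell_{2,2}|^{-2}(C_2^*C_2)^{-1}$, and substituting into the second gives $|\ell_{1,1}|^2 |\ell_{2,2}|^{-2} C_1 (C_2^*C_2)^{-1} C_1^* + |b_1|^2 C_1 C_1^* = I$. This is an equation of the form $\mu\, C_1 (C_2^*C_2)^{-1} C_1^* + |b_1|^2 C_1C_1^* = I$ for a positive scalar $\mu$; rearranging gives $C_1\big(\mu (C_2^*C_2)^{-1} + |b_1|^2 I - \text{(something)}\big)C_1^*$-type relation. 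The key point is that this forces $(C_2^*C_2)^{-1}$ to be a polynomial (indeed affine) combination of $C_1^{-1}C_1^{-*}$ and the identity, hence $C_1^*C_1$ and $C_2^*C_2$ commute, contradicting that $\{C_1^*C_1,C_2^*C_2\}$ generates $M_s(\CC)$ unless $s=1$. The only way to avoid the contradiction when $s>1$ is $|b_1|^2$ arranged so that the offending noncommuting term disappears, which will pin down $b_1=0$.

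More carefully, I expect the argument to run: from the two split identities one eliminates $TT^*$ to obtain a relation purely among $C_1^*C_1$, $C_2^*C_2$, $|b_1|^2$, $|b_2|^2$ (with $b_1b_2^*=0$); since these two positive operators do not commute (as $s>1$), the coefficients of the ``genuinely noncommuting'' pieces must vanish, which can only happen if $b_1=0=b_2$. I would also need to rule out that $P_{4,4}$ or $T$ fails to be what Lemma~\ref{l:PnQ} says — but those are already established there ($P_{4,4}$ unitary, $T=Q_{4,3}^{-*}$ invertible), so no extra work is needed. Combined with $b_1=b_2=0$ this is exactly $b=0$; and then I would invoke the symmetry remark once more to conclude the other case of the dichotomy identically. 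The main obstacle I anticipate is the bookkeeping in extracting a clean scalar identity from \eqref{e:Sig-Lam} after applying $G$ and $G^*$ and then arguing that noncommutativity of $C_1^*C_1,C_2^*C_2$ forces the $b_j$ to vanish; everything else is assembling pieces already in place.
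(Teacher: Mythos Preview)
Your approach diverges from the paper's, and it has a genuine gap at its central computational step.

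The paper does \emph{not} squeeze more out of the block identity $\Sigma\Sigma^*+\Lambda\Lambda^*=I$. Instead it argues dynamically: assuming $s>1$ and (say) $b_1\ne 0,\ b_2=0$ with $\fL$ diagonal, it composes $\psi=\varphi\circ f_\theta\circ\varphi$ where $f_\theta(y)=e^{i\theta}(y_2,y_1)$. Since $\psi$ is again an automorphism of $\fP$, Lemma~\ref{l:s=1} applied to $\psi$ forces $\psi_1(0)\psi_2(0)=0$, i.e.\ $\varphi_1(0,e^{i\theta}b_1)\,\varphi_2(0,e^{i\theta}b_1)=0$ for all $\theta$. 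Analyticity then kills one factor identically, and evaluating the derivative at $0$ contradicts $\ell_{2,2}\ne 0$. The anti-diagonal case is handled the same way with $f_\theta(y)=e^{i\theta}y$.

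Your route instead tries to read $b=0$ directly off $\Sigma\Sigma^*+\Lambda\Lambda^*=I$, but the key simplification is wrong. In the diagonal case $P_{2,2}=\ell_{2,2}\,C_2 T C_2^{-1}$, so
\[
P_{2,2}P_{2,2}^* \;=\; |\ell_{2,2}|^2\, C_2\, T\,(C_2^*C_2)^{-1}\,T^*\,C_2^*,
\]
\emph{not} $|\ell_{2,2}|^2 C_2 T T^* C_2^*$. Hence setting $b_2=0$ gives
\[
T\,(C_2^*C_2)^{-1}\,T^* \;=\; |\ell_{2,2}|^{-2}\,(C_2^*C_2)^{-1},
\]
which does not determine $TT^*$, and likewise the $(2,2)$ block yields
\[
|\ell_{1,1}|^2\,T\,(C_1^*C_1)^{-1}\,T^* + |b_1|^2 I \;=\; (C_1^*C_1)^{-1}.
\]
There is no way to eliminate the unknown $T$ between these two relations to produce a polynomial identity in $(C_1^*C_1)^{-1}$ and $(C_2^*C_2)^{-1}$ alone; the first only says $|\ell_{2,2}|T$ conjugates $(C_2^*C_2)^{-1}$ to itself, which is a very weak constraint on $T$. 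So the step ``substituting into the second gives $\mu\,C_1(C_2^*C_2)^{-1}C_1^*+|b_1|^2C_1C_1^*=I$'' does not follow, and with it the intended commutativity contradiction collapses. The analogous identities coming from $QQ^*=B_0$ introduce a second unknown $S$ and do not close the system either. To make an algebraic argument work here you would need an additional relation tying $T$ (or $S$) to the $C_j$ beyond what Lemma~\ref{l:PnQ} and \eqref{e:Sig-Lam} provide; absent that, the paper's compositional trick is the missing idea.
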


\begin{proof}
 Arguing by contradiction, 
 suppose $s>1$ and $b\ne 0.$  Observe that conclusion that $b_1b_2^*=0$ of  Lemma~\ref{l:s=1} applies to any automorphism of $\fP.$   Thus, without loss of generality,  $b_1\ne 0$ and $b_2=0.$  Another appeal to  Lemma~\ref{l:s=1} gives either $\ell_{j,k}=0$ for $j\ne k$ or $\ell_{j,k}=0$ for $j=k.$ Suppose the first case holds. Given $\theta$ real, let  $f_\theta$ denote the automorphism of $f_\theta$ given by $f_\theta(y_1,y_2)=e^{i\theta}(y_2,y_1)$ and let $\psi=\varphi\circ f_\theta.$ Thus $\psi:\fP\to\fP$ is an automorphism  and 
\[
 \psi(0)=\varphi\circ f_\theta(b_1,0) = \varphi(0,e^{i\theta}b_1) = (\varphi_1(0,e^{i\theta}b_1),\, \varphi_2(0,e^{i\theta}b_1)).
\]
 Hence $\varphi_1(0,e^{i\theta}b_1) \, \varphi_2(0,e^{i\theta}b_1)$ is identically $0.$
 By analyticity,  $\varphi_k(0,e^{i\theta}b_1)=0$ for some $k$ and all $\theta$ and hence, for either $k=1$ or $k=2,$ we have $g_k(z)=\varphi_k(0,z)$ is identically $0.$ Since $g_1(0)=b_1\ne 0,$ it follows that $g_2(z)$ is identically $0.$ Thus $0=g_2^\prime(0)=\ell_{2,2}\ne 0,$ a contradiction. 

Now suppose instead that $\varphi_{j,j}=0$ for $j=1,2.$ In this case set $f_\theta(y_1,y_2)=e^{i\theta}(y_1,y_2)$ and $\psi=\varphi\circ f_\theta\circ\varphi.$ Thus, 
\[
 \psi(0)= \varphi(e^{i\theta}b_1,0)
\]
 and $\varphi_1(e^{i\theta}b_1,0) \, \varphi_2(e^{i\theta}b_1,0)$ is identically $0.$ Hence
 $g_2(z)=\varphi_2(z,0)$ is identically $0$ and thus $0=g_2^\prime(0)= \ell_{2,1}\ne 0,$ a contradiction which shows $b_1=0=b_2.$
\end{proof}

\begin{lemma}
\label{l:s=1:b=0}
 If $s=1,$ then $b=0.$ 
\end{lemma}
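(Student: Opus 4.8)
The plan is to reduce, in the case $s=1$, to the classical fact that the automorphisms of the complex $\ell^1$-ball $\{z\in\C^2:|z_1|+|z_2|<1\}$ fix the origin, but to do so self-containedly using the structure already established. When $s=1$, the matrices $C_1,C_2$ are unimodular scalars, so after the trivial rescaling $x_j\mapsto \bar C_j x_j$ we may as well assume $C_1=C_2=1$; then Proposition~\ref{p:fP-alt} says $X\in\fP$ iff $\left(\begin{smallmatrix}X_1^*&X_2\\X_2^*&X_1\end{smallmatrix}\right)$ is a strict contraction, and in particular $\fP[1]$ is exactly the generalized complex ellipsoid $\{|z_1|+|z_2|<1\}$ (the $2\times2$ matrix with entries $\bar z_1,z_2,\bar z_2,z_1$ has norm $|z_1|+|z_2|$). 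So it suffices to show any automorphism $\varphi$ of $\fP$ has $\varphi(0)=b\in\C^2$ equal to $0$, and for this I would restrict attention to level one and invoke the one-variable structure.

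\medskip
First I would feed the hypothesis $s=1$ back through the machinery of Subsections~\ref{s:affine}--\ref{s:tmd}, exactly as in the $s>1$ case, to obtain the equality $\hfP=\cB_B=\cB_E$ together with the block descriptions of $P$ and $Q$ in Lemma~\ref{l:PnQ} and the scalar identity \eqref{e:pres=1}. The point is that when $s=1$ equation~\eqref{e:pres=1} is an honest scalar equation relating $b=(b_1,b_2)$ and the entries $\ell_{j,k}$ of $\fL$; combined with $\Sigma\Sigma^*+\Lambda\Lambda^*=I$ from \eqref{e:Sig-Lam} (now $2\times2$ scalar) and the invertibility of $\fL$, this should pin down the relationship between $b$ and $\fL$ tightly. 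In particular \eqref{e:pres=1} forces $|\,\ell_{1,1}b_2-\ell_{2,1}b_1\,|^2$ and $|\,\ell_{1,2}b_2-\ell_{2,2}b_1\,|^2$ to be controlled, and using $\Sigma\Sigma^*+\Lambda\Lambda^*=I$ one learns $|\Lambda|\le 1$ entrywise in an appropriate sense; chasing these scalar constraints, together with the requirement that $B=B_0^{-1/2}YB_0^{-1/2}$ actually defines the same ball as $E$ (equivalently $B_0=\cL(b)\succ0$, i.e. $\left(\begin{smallmatrix}1&b_1&b_2&0\\ \bar b_1&1&0&b_2\\ \bar b_2&0&1&b_1\\0&\bar b_2&\bar b_1&1\end{smallmatrix}\right)\succ0$), should be enough to either directly conclude $b=0$ or to reduce to a single scalar extremal problem.

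\medskip
The cleanest route to finish, which I expect the authors take, is to pass to an automorphism of $\fP[1]=\{|z_1|+|z_2|<1\}$ and run a short Schwarz-lemma argument by hand rather than quoting \cite{Kodama,first-steps}. Concretely: $\varphi$ restricts to a biholomorphism of the bounded circled domain $\Omega=\fP[1]$; composing with a trivial automorphism we may assume $\varphi_2(0)=0$, so $\varphi$ maps $0$ to $(b_1,0)$. Then for the slice $g(z)=\varphi(z e_1)$ one gets a holomorphic map of the disc $\{|z|<1\}$ into $\Omega$ with $g(0)=(b_1,0)$, hence $|b_1|<1$, and by the circled symmetry of $\Omega$ and Cartan-type linearity of the isotropy (the free Caratheodory--Cartan--Kaup--Wu result from \cite{proper} quoted in the introduction, applied once $b=0$ is known) one derives a contradiction unless $b_1=0$ — the standard trick being to iterate $\varphi$ restricted to a suitable one-dimensional slice and use that the only holomorphic self-map of the disc with a fixed interior point and derivative of modulus one is a rotation, forcing the orbit of $0$ to be bounded away from the boundary in a way incompatible with $b_1\ne0$. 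The main obstacle is organizing this last step so that it is genuinely self-contained: one must identify an explicit one-dimensional invariant (or almost-invariant) slice of $\Omega$ on which $\varphi$ induces a disc automorphism, and verify the boundary behavior, without appealing to the general Reinhardt-domain classification; I would expect to use the extremal structure of $\hfP=\cB_B$ from Subsection~\ref{s:tmd} precisely to produce that slice and to show the induced disc map has a boundary fixed point, whence by the Julia--Wolff--Carath\'eodory / Schwarz lemma it is the identity and $b_1=0$.
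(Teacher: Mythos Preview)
Your proposal is not a proof but a loose plan, and it contains both a concrete error and a genuine missing idea.

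The error is in your third paragraph: you write ``composing with a trivial automorphism we may assume $\varphi_2(0)=0$.'' Trivial automorphisms of $\fP$ are $(z_1,z_2)\mapsto(\gamma_1 z_1,\gamma_2 z_2)$ and the coordinate swap; pre- or post-composing $\varphi$ with one of these preserves the multiset $\{|b_1|,|b_2|\}$, so you cannot arrange $b_2=0$ this way unless one of the $b_j$ already vanishes. The rest of that paragraph then leans on this false reduction, and in any case the Schwarz/iteration sketch never identifies an actual invariant disc, as you yourself note.

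Your second paragraph gestures at the right objects but stops short of the key step. Equation~\eqref{e:pres=1} together with $\Sigma\Sigma^*+\Lambda\Lambda^*=I$ alone do \emph{not} force $b=0$; there are too few equations. The paper's self-contained argument (its second proof) extracts two further relations that you do not mention. First, the $(1,4)$ block of each $Y_j$ vanishes, which from $Y_j=p_1C_jq_j^*+\cdots$ yields
\[
Q_{4,3}^*\begin{pmatrix}P_{1,3}\\P_{1,2}\end{pmatrix}=-P_{1,1}\begin{pmatrix}Q_{4,1}^*\\Q_{4,2}^*\end{pmatrix}.
\]
Second, reading off the $(1,2),(1,3)$ entries of $PP^*=B_0$ and the $(4,2),(4,3)$ entries of $QQ^*=B_0$ (using the block forms of Lemma~\ref{l:PnQ}) gives
\[
Q_{4,3}^{-1}\begin{pmatrix}P_{1,3}\\P_{1,2}\end{pmatrix}=P_{1,1}^{-*}\begin{pmatrix}Q_{4,1}^*\\Q_{4,2}^*\end{pmatrix}.
\]
When $s=1$ these are scalar, and combining them yields $(|Q_{4,3}|^2+|P_{1,1}|^2)(Q_{4,1}^*,Q_{4,2}^*)^T=0$, hence $Q_{4,1}=Q_{4,2}=0$; substituting back into the $QQ^*=B_0$ relation then gives $b_1=b_2=0$ directly. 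This is the missing algebraic idea: exploit the \emph{zero} entries of $Y$ (not just the nonzero ones already used in Lemma~\ref{l:PnQ}) to get a second linear relation between the same unknowns with the opposite sign.

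The paper also records, as a first proof, the soft route you allude to: simply observe $\fP[1]=\{|z_1|+|z_2|<1\}$ and cite the known classification of its automorphisms from \cite{first-steps,Kodama}. If you are content to quote that, one sentence suffices; but your attempt to redo it ``by hand'' via slices and Julia--Wolff does not go through as written.
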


\begin{proof}
 Since $s=1,$ we have $C_1,C_2\in \C$ are unimodular 
 and  $\fP[1]$ is the set $\{z=(z_1,z_2)\in \CC^2: |z_1|+|z_2|<1\}.$
 Indeed, given $(z_1,z_2)\in \CC^2,$  the matrix
\[
  \begin{pmatrix} z_1^* & z_2\\z_2^* & z_1 \end{pmatrix}
\] 
 is a contraction if and only if the self-adjoing
 matrix,
\[
  \begin{pmatrix} r & z_2e^{it} \\ z_2^*e^{-it} & r \end{pmatrix}
\]
 is a contraction, where $z_1=re^{it}$ is the polar
 decomposition of $z_1.$  This latter matrix
 has eigenvalues $r\pm |z_2|$ and hence is a 
 contraction if and only if $|z_1|+|z_2|<1.$
 
 The set $\fP[1]$ is known as a \emph{psuedo-ellipse} and 
 $f=\varphi[1]:\fP[1]\to\fP[1]$ is an automorphism (in the
  classical  several complex variables sense). It  is known 
 (see \cite{first-steps}) that automorphisms of $\fP[1]$  
 are compositions of maps
 of the form $(z_1,z_2)\mapsto (\gamma_1 z_1,\gamma_2 z_2)$
 and $(z_1,z_2)\mapsto (z_2,z_1),$ where $\gamma_j\in \CC$
 are unimodular (the proof uses techniques from several
 complex variables and lie groups).  In particular,
 $b=0.$   Thus in any case $b=0$ and $\varphi$ 
 is linear by \cite[Theorem~4.4]{proper},
 since $\varphi(0)=b=0$ and the domain $\fP$ 
 is circularly symmetric. 
\end{proof}

 We now give a self contained alternate proof of
 Lemma~\ref{l:s=1:b=0} based upon results in this article.

\begin{proof}[Second proof of Lemma~\ref{l:s=1:b=0}]
  In addition to the identities
 of equation~\eqref{e:PnQ} obtained by
 comparing the equations~\eqref{e:YvPQ-mat} 
 and \eqref{d:Y}, observe
\begin{equation*} 
\begin{split}
 P_{1,1} Q_{4,1}^* + P_{1,3} Q_{4,3}^* &=0\\
     P_{1,1} Q_{4,2}^* + P_{1,2} Q_{4,3}^* &=0,
\end{split}
\end{equation*}
which can be summarized as
\begin{equation}
 \label{e:14entry}
 Q_{4,3}^* \begin{pmatrix} P_{1,3} \\ P_{1,2}\end{pmatrix}
  = - P_{1,1} \begin{pmatrix} Q_{4,1}^* \\ Q_{4,2}^*\end{pmatrix}.
\end{equation}

 From $(1,2)$ and $(1,3)$ entries of $PP^*=B_0$
 and using $T_j=Q_{4,3}^{-*},$ 
\[
 \begin{pmatrix} P_{1,2}& P_{1,3} \end{pmatrix}\, 
 \begin{pmatrix} \ell_{2,2}^* & \ell_{1,2}^*\\ 
     \ell_{2,1}^*&\ell_{1,1}^* \end{pmatrix}
  = Q_{4,3} \begin{pmatrix} b_1 & b_2 \end{pmatrix}.
\]
 Equivalently,
\[
  Q_{4,3}^{-*}
  \begin{pmatrix} P_{1,3}^*&P_{1,2}^*\end{pmatrix}
 \, \begin{pmatrix} \ell_{1,1} & \ell_{2,1} \\ 
    \ell_{1,2} & \ell_{2,2} \end{pmatrix}
  = \begin{pmatrix} b_2^* & b_1^* \end{pmatrix}.
\]
 Similarly from  the $(4,2)$ and $(4,3)$ entries of $QQ^*=B_0,$
\[
  P_{1,1}^{-1} \, \begin{pmatrix} Q_{4,1} & Q_{4,2} \end{pmatrix}
 \, \begin{pmatrix} \ell_{1,1} & \ell_{2,1} \\ 
    \ell_{1,2} & \ell_{2,2} \end{pmatrix} 
  = \begin{pmatrix} b_2^* & b_1^* \end{pmatrix}.
\]
  It follows that
\begin{equation}
\label{e:14entry-alt}
 Q_{4,3}^{-1} \begin{pmatrix} P_{1,3} \\ P_{1,2}\end{pmatrix}
  = P_{1,1}^{-*} \begin{pmatrix} Q_{4,1}^* \\ Q_{4,2} \end{pmatrix}.
\end{equation}
 From equations~\eqref{e:14entry} and \eqref{e:14entry-alt},
\[
 (|Q_{4,3}|^2+|P_{1,1}|^2)
    \begin{pmatrix} Q_{4,1}^* \\ Q_{4,2} \end{pmatrix} =0
\]
 and we conclude  $Q_{4,1}=Q_{4,2}=0$ and thus $b=0.$
\end{proof}

\subsection{Completion of the proof of Theorem~\ref{t:main}}
\label{s:concludes}
 From Lemmas~\ref{l:s>1:b=0} and Lemma~\ref{l:s=1:b=0} it follows
 that $b=0$ and hence, by \cite[Theorem~4.4]{proper}, $\varphi$
 is linear. 

 Since $\varphi$ is linear,
\[
 \varphi(x) = (w_1, w_2),
\]
where
\[
  \begin{pmatrix} w_1 \\ w_2 \end{pmatrix}
  =\fL \begin{pmatrix} x_1 \\ x_2 \end{pmatrix}.
\]

 If $s>1,$ then, by Lemma~\ref{l:s=1} and composing with the
 automorphism $(x_1,x_2)\mapsto (x_2,x_1),$ we assume $\fL$ is 
 diagonal, in which case the diagonal entries must be unimodular
 and the proof is complete.

 Now suppose $s=1.$ From the (first) proof of
  Lemma~\ref{l:s=1:b=0} it can be seen that $\varphi$ is trivial.
 Alternately, from Lemma~\ref{l:PnQ} and the relation $PP^*=B_0,$
  it follows that the matrix
\[
 \begin{pmatrix} \ell_{2,2} & \ell_{2,1} \\ \ell_{1,2} & \ell_{1,1} \end{pmatrix}\, T 
\]
 is unitary. Hence $\fL$ is a multiple of a unitary. Since $\varphi$ is an automorphism of $\fP,$ we conclude that $\fL$ is unitary. 
 In particular, $|\ell_{1,1}|^2+|\ell_{2,1}|^2=1.$ 
 Since $(t,0)$ is in  $\fP$ for $0<t<1,$  so is $\varphi(t,0)=t(\ell_{1,1},\ell_{2,1}).$ It now follows from  
item~\ref{i:fP-alt3} of Proposition~\ref{p:fP-alt} that $\ell_{1,1}\ell_{2,1}=0.$ A similar argument shows $\ell_{1,2}\ell_{2,2}=0.$ Hence $\cL$ is unitary and either $\ell_{j,k}=0$ for $j\ne k$  
or $\ell_{j,k}=0$ for $j=k$ and the proof is complete.
\qed

\newpage

\printindex

\end{document}